\documentclass{article}
%%%%%%%%%%%%%%%%%%%%%%%%%%%%%%%%%%%%%%%%%%%%%%%%%%%%%%%%%%%%%%%%%%%%%%%%%%%%%%%%%%%%%%%%%%%%%%%%%%%%%%%%%%%%%%%%%%%%%%%%%%%%%%%%%%%%%%%%%%%%%%%%%%%%%%%%%%%%%%%%%%%%%%%%%%%%%%%%%%%%%%%%%%%%%%%%%%%%%%%%%%%%%%%%%%%%%%%%%%%%%%%%%%%%%%%%%%%%%%%%%%%%%%%%%%%%
\usepackage{amsfonts}
\usepackage{amssymb,amsmath}
\usepackage{verbatim}

\setcounter{MaxMatrixCols}{10}
%TCIDATA{OutputFilter=Latex.dll}
%TCIDATA{Version=5.50.0.2953}
%TCIDATA{<META NAME="SaveForMode" CONTENT="1">}
%TCIDATA{BibliographyScheme=Manual}
%TCIDATA{LastRevised=Thursday, March 22, 2012 03:29:21}
%TCIDATA{<META NAME="GraphicsSave" CONTENT="32">}

\newtheorem{theorem}{Theorem}[section]

\newtheorem{remark}{Remark}[section]
\newtheorem{lemma}{Lemma}[section]
\numberwithin{equation}{section}
\input{tcilatex}

\begin{document}

\title{\textbf{\LARGE On necessary and sufficient conditions for the
variable exponent Hardy type inequality} \\
[15pt]}
\author{Farman I. Mamedov \\
%EndAName
\emph{Institute Mathematics and Mechanics of Nat.Acad.Sci., Azerbaijan} \\
{\small e-mail: mfarman@math.ab.az}}
\date{ December 20, 2012\\
[18pt]}
\maketitle

\begin{abstract}
{{\small {\ We derive a number of equivalent criterions for the variable exponent Hardy type inequality
\begin{equation*}
\left\Vert \frac{1}{x}\int_{0}^{x}f(t)dt\right\Vert _{L^{p(.)}(0,1)}\leq
C\left\Vert f\right\Vert _{L^{p(.)}(0,1)}; f\geq 0.
\end{equation*}
to hold, whenever the exponent $p:(0,1)\to (1,\infty)$ is increasing or decreasing near small neighborhood of the origin.}}}
\end{abstract}

\bigskip

\bigskip

\bigskip

\textbf{{\footnotesize {Key words and phrases :}}} {\footnotesize {Hardy
operator, Hardy type inequality, variable exponent, weighted inequality,
necessary and sufficient condition.}}

{\footnotesize {\textbf{2000 Mathematical Subject Classification:} 42A05,
42B25, 26D10, 35A23}\\[25pt]
}

\section{Introduction}

\label{S:1}

\quad We study Hardy's inequality
\begin{equation}  \label{E:1.1}
\ \ \left\Vert x^{-1}Hf\right\Vert _{L^{p\left( .\right) }\left( 0,1\right)
}\leq C\left\Vert f\right\Vert _{L^{p\left( .\right) }\left( 0,1\right) }
\end{equation}%
in the norms of variable exponent Lebesgue space $L^{p\left(.\right)}(0,1).$
Here $Hf(x)=\int_{0}^{x}f(t)dt$ is Hardy's operator and the constant $C>0$
does not depend on arbitrary positive measurable function $f.$ This subject has been studied
by several authors (see, e.g. \cite{[2]}, \cite%
{[4]}, \cite{[5]}, \cite{[7]}, \cite{[8]}, \cite{[9]}, \cite{[10]}, \cite%
{[11]}, \cite{[12]}, \cite{[13]}, \cite{[14]}, \cite{[15]}, \cite{[16]}).

There are several sufficient conditions on the function $p:(0,1)\rightarrow
(1,\infty )$ for the inequality \eqref{E:1.1} to hold. They are expressed in
terms of regularity conditions for $p\,$ at the origin. It follows from the
results of works \cite{[4]}, \cite{[9]}, \cite{[14]}  ( see, also \cite{[2]}, \cite {[11]}, \cite{[13]})
that the inequality \eqref{E:1.1} holds if $p^{-}=\inf $ $p>1$, $p^{+}=\sup $
$p(x)<\infty $ and the condition
\begin{equation}
A:=\underset{x\rightarrow 0}{\limsup }\,|p(x)-p(0)|\log \frac{1}{x}<\infty .
\label{E:1.2}
\end{equation}%
is satisfied.

One can think that the inequality \eqref{E:1.1} does not need for a
condition type of \eqref{E:1.2} at all. Since there exists an example of
function $p$ for which the inequality \eqref{E:1.1} is violated by some
sequence of functions $\{f_{k}\}$ (see, \cite{[9]}, \cite{[7]}), we see that
the inequality \eqref{E:1.1} does not hold without restriction on $p$ (Note,
the $p$ there is not monotone and does not satisfy \eqref{E:1.2}).
In \cite{[10]} (see, also \cite{[7]}), we had proved that the condition
\begin{equation}
B:=\underset{x\to 0}{\lim \sup }\,\left[ p(x)-p\left( \frac{x}{2}\right)\right] %
\log \frac{1}{x}<\infty  \label{E:1.3}
\end{equation}%
is necessary for this case. Note that, condition \eqref{E:1.3} is strictly
weaker than \eqref{E:1.2}. This condition is new and somewhat surprising.
For example, it is satisfied by $p(x)=p(0)+\frac{C}{\left( \ln \frac{1}{x}%
\right) ^{\alpha }}$ \thinspace\ and $0<\alpha<1,$ \thinspace\ $C>0,$ whereas
the condition \eqref{E:1.2} is not satisfied. For the exponent, that is
nondecreasing near the origin, the condition \eqref{E:1.3} is also
sufficient if the number $B$ satisfies $B<p(0)\left(p(0)-1\right)$ (see,
\cite{[10]}). Unfortunately, the good condition \eqref{E:1.3} is no longer
sufficient for the inequality \eqref{E:1.1} to hold if the condition on $B$
be ignored. In this case, a necessary and sufficient condition is still an
open problem.

In Theorem \ref{t:2.2}, we prove that the condition
\begin{equation}  \label{E:1.4}
\int_{a}^{1}\left( a^{\frac{1}{p^{\prime }(a)}} x^{-\frac{1}{p^{\prime }(x)}%
}\right) ^{p(x)}\frac{dx}{x}\leq C,\text{ \ }0<a<1
\end{equation}
and several other equivalent conditions are necessary and sufficient for the
inequality \eqref{E:1.1} to hold in the case of nondecreasing exponents.

Also, in Theorem \ref{t:2.1}, we prove that no condition is needed if the
exponent $p$ is nonincreasing at small neighborhood of the origin.

We refer to the monograph \cite{[3]} and references therein for a full
description of variable exponent Lebesgue spaces and boundedness of
classical integral operators there.

\section{Main results and notation}\label{S:2}

\qquad As to the basic properties of spaces\ \ $L^{p\left( .\right) }$ $,$
we refer to \cite{[6]}, \cite{[17]}. Throughout this paper, it is assumed
that\ $p\left( x\right) $ is a measurable function in $\left( 0,1\right) ,$
taking its values from the interval $\left[ 1,\infty \right) $ \ with \ $%
p^{+}=\sup \left\{ p\left( x\right) :x\in \left( 0,1\right) \right\} <\infty
$ . The space of functions\ $L^{p\left( .\right) }\left( 0,1\right) $ is
introduced as the class of measurable functions \ $f\left( x\right) $ on\ \ $%
\left( 0,1\right) $ which have a finite $I_{p\left( .\right) }\left(
f\right) =\int_{0}^{1}\left\vert f\right\vert ^{p\left( x\right) }dx$
modular. A norm in\ $L^{p\left( .\right) }\left( 0,1\right) $ is given in
the form
\begin{equation*}
\left\Vert f\right\Vert =\left\{ \lambda >0:I_{p\left( .\right) }\left(
\frac{f}{\lambda }\right) \leq 1\right\} .
\end{equation*}%
For $\ 1<p^{-},$ $p^{+}<\infty $ the space $L^{p(.)}(0,1)$ is a reflexive
Banach space.

The relation between modular and norm is expressed by the following
inequalities (see, f.e. \cite{[17]}):
\begin{equation}
\left\Vert f\right\Vert _{L^{p(.)}(0,l)}^{p^{+}}\leq I_{p}\left( f\right)
\leq \left\Vert f\right\Vert _{L^{p(.)}(0,l)}^{p^{-}},\,\ \ 1\geq
\,\left\Vert f\right\Vert _{p(.),}  \label{e:2.1}
\end{equation}

\begin{equation}  \label{e:2.2}
\left\Vert f\right\Vert _{L^{p(.)}(0,l)}^{p^{-}} \leq I_{p}\left( f\right)
\leq \left\Vert f\right\Vert _{L^{p(.)}(0,l)}^{p^{+}},\,\,\,\,1\leq
\left\Vert f\right\Vert _{p(.)} .
\end{equation}
Such estimates alow us to perform our estimates in terms of a modular.

For the function $1\leq p(x)<\infty $ $\ \ \ \ $\ $p^{\prime }(x)$ denotes
the conjugate function of $p(x),$ $\frac{1}{p(x)}+\frac{1}{p^{\prime }(x)}=1$
\ and $p^{\prime }=\infty $ \ if \ $p=1.$ We denote by $\ C,C_{1},C_{2},...$
various positive constants whose values may vary at each appearance. By $\chi _{E}$ we
denote the characteristic function of set $E.$
We say the function $f$ is
almost increasing (almost decreasing) on $[0,1]$ if $f(x)\leq Cf(y)$ \, ($f(y)\leq Cf(x)$) for all $x\leq y$  in $ [0,1]$\, \, and $C>0.$

Following main results are obtained in this paper. \vspace{2pt}

\begin{theorem}
\label{t:2.1} Let $p:(0,1)\to [1,\infty)$ be a measurable function such that
$p$ is nonincreasing on some interval $(0,\epsilon), \,
\epsilon>0$ and $p^{+}<\infty.$ Then it holds the inequality \eqref{E:1.1} for any positive
measurable function $f.$
\end{theorem}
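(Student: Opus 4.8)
The plan is to pass to the modular and prove a uniform modular bound. By the norm--modular relations \eqref{e:2.1}--\eqref{e:2.2}, after normalizing $f$ by its norm it suffices to produce a constant $C$ with $I_{p}(x^{-1}Hf)\le C$ whenever $I_{p}(f)\le 1$. Fix $\delta\in(0,\epsilon)$ and set $p_{0}:=p(\delta)$. Since $p>1$ and $p$ is nonincreasing on $(0,\epsilon)$, monotonicity forces $p(x)\ge p_{0}>1$ for every $x\in(0,\delta)$, so $p'(x)$ stays bounded on the inner interval; this is the only use of the hypothesis, and it is precisely what excludes the degenerate situation $p\equiv 1$ near the origin. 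I would split $(0,1)=(0,\delta)\cup(\delta,1)$. The outer piece is harmless: $x^{-1}\le\delta^{-1}$ and $\int_{0}^{1}f\le I_{p}(f)+1\le 2$ (bounding $f\le f^{p}$ on $\{f\ge 1\}$ and $f\le 1$ elsewhere), whence $x^{-1}Hf\le 2/\delta$ and $\int_{\delta}^{1}(x^{-1}Hf)^{p(x)}\,dx\le\max\{1,(2/\delta)^{p^{+}}\}$.

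The inner interval carries the real content. Fixing $s\in(0,1)$ and applying H\"older's inequality on $(0,x)$ with the constant exponent $p(x)$ and weight $t^{\,s/p'(x)}$, then raising to the power $p(x)$ and using $p(x)/p'(x)=p(x)-1$, I expect the pointwise estimate
\[
\Big(\tfrac1x\int_{0}^{x}f\Big)^{p(x)}\le C_{s}\,x^{-1-s(p(x)-1)}\int_{0}^{x}f(t)^{p(x)}\,t^{\,s(p(x)-1)}\,dt ,\qquad C_{s}=(1-s)^{-(p^{+}-1)} .
\]
I would then split $f=f\chi_{\{f\ge 1\}}+f\chi_{\{f<1\}}$. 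On $\{f<1\}$ one has $f(t)^{p(x)}\le 1$, and the corresponding contribution is bounded pointwise in $x$ because $C_{s}\,x^{-1-s(p(x)-1)}\int_{0}^{x}t^{\,s(p(x)-1)}\,dt\le C_{s}$, giving at most $C_{s}\delta$ after integrating in $x$.

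The decisive term comes from $\{f\ge 1\}$. Here monotonicity gives $p(x)\le p(t)$ for $t\le x$, hence $f(t)^{p(x)}\le f(t)^{p(t)}$, and Fubini turns the double integral into $C_{s}\int_{0}^{\delta}f(t)^{p(t)}\big[\int_{t}^{\delta}x^{-1}(t/x)^{s(p(x)-1)}\,dx\big]\,dt$. Since $p(x)-1\ge p_{0}-1$ on $(0,\delta)$ and $t/x\le 1$, the inner integral is dominated by $\int_{t}^{\delta}x^{-1}(t/x)^{\theta}\,dx\le 1/\theta$ with $\theta=s(p_{0}-1)>0$, so the main term is at most $(C_{s}/\theta)\int_{0}^{\delta}f^{p(t)}\le C_{s}/\theta$; adding the three pieces yields $I_{p}(x^{-1}Hf)\le C$. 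I expect the main obstacle to be exactly the coupling of the variable exponent inside the double integral, where $f(t)^{p(x)}$ mixes the two variables: it is the combination of the $\{f\ge 1\}/\{f<1\}$ splitting (to convert $p(x)$ into $p(t)$ through monotonicity) with the weight $t^{\,s/p'(x)}$ (to manufacture the integrable decay $\theta>0$) that decouples them, and choosing a single weight exponent that simultaneously tames the error term and keeps the main inner integral convergent via $p_{0}>1$ is the delicate point.
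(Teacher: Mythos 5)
Your proof is correct, and it takes a genuinely different route from the paper's. The paper, after the same normalization $I_{p(.)}(f)\le 1$, splits $(0,1)$ at the monotonicity threshold $\epsilon$ and, on the inner interval, writes $x^{-1}Hf(x)=\int_0^1 f(tx)\,dt$ and applies Minkowski's integral inequality for $L^{p(.)}$-norms (an imported tool, cited there to \cite{[6]}, \cite{[17]}), reducing the problem to a norm bound for the dilates; that bound, $\left\Vert f(t\,\cdot)\right\Vert_{p(.);(0,\epsilon)}\le 2^{1/p^{-}}t^{-1/p^{-}}$, is proved from exactly the two ingredients you also use --- the splitting into $\{f\ge 1\}$ and $\{f<1\}$ and the monotone conversion $f(tx)^{p(x)}\le f(tx)^{p(tx)}$ --- and the final integration in $t$ converges because $p^{-}>1$; the outer interval is then handled by Young's inequality. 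You instead never leave the modular: H\"older's inequality with the frozen exponent $p(x)$ and the classical Hardy weight $t^{s/p'(x)}$, the identical large/small splitting to trade $f(t)^{p(x)}$ for $f(t)^{p(t)}$, then Fubini, with convergence supplied by $\theta=s(p_{0}-1)>0$. I checked your pointwise H\"older estimate and the three resulting pieces; they are all correct as stated. So both proofs hinge on the same key observation (monotonicity enters only through $f^{p(x)}\le f^{p(t)}$ on $\{f\ge 1\}$, the set $\{f<1\}$ contributing only its measure), but yours replaces the Minkowski integral inequality in variable exponent spaces by the classical textbook Hardy argument, making it more elementary and self-contained, while the paper's dilation argument is shorter once that machinery is granted.

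One remark on hypotheses: the theorem as printed allows $p$ to take the value $1$, yet some strict bound $p>1$ near the origin is genuinely indispensable (for $p\equiv 1$ near $0$ the inequality fails, just as in $L^{1}$). The paper uses this tacitly, through $p^{-}>1$ in the convergence of $\int_0^1 t^{-1/p^{-}}dt$; you use only $p(\delta)>1$ at the chosen split point, which monotonicity then spreads over $(0,\delta)$ --- a slightly more economical, and clearly flagged, use of the same unavoidable assumption.
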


\vspace{2pt}

\begin{theorem}
\label{t:2.2} Let $p:(0,1)\rightarrow \lbrack 1,\infty )$ be a nondecreasing
function such that $p(1)<\infty .$ Then the following statements are
equivalent:
\begin{itemize}
\item[1.] There exists a constant $C>0$ such that the inequality
\begin{equation}
\ \ \left\Vert x^{-1}Hf\right\Vert _{L^{p\left( .\right) }\left( 0,1\right)
}\leq C\left\Vert f\right\Vert _{L^{p\left( .\right) }\left( 0,1\right) }
\label{E:2.1}
\end{equation}
holds for any positive measurable function $f.$

\item[2.] The condition
\begin{equation}  \label{E:2.2}
\int_{a}^{1} x^{-\frac{1}{p^{\prime }(x)}}\frac{dx}{x}\leq Ca^{-\frac{1}{%
p^{\prime }(a)}}, \, 0<a<1
\end{equation}
is satisfied.

\item[3.] There exists an $\epsilon>0$ such that the function $x^{-\frac{1}{p^{\prime }(x)}+\epsilon}$
is almost decreasing:
\begin{equation}  \label{e:2.5}
t_{2}^{-\frac{1}{p^{\prime }(t_{2})}+\epsilon}\leq C t_{1}^{-\frac{1}{p^{\prime }(t_{1})}+\epsilon} \hspace{0.5cm}\text{as} \hspace{0.5cm}
0<t_{1}\leq t_{2}<1.
\end{equation}
\item[4.] The condition \eqref{E:1.4}
is satisfied.
\item[5.] The condition
\begin{equation}  \label{E:2.4}
\Vert x^{-1} \Vert_{p(.);(a,1)}\leq Ca^{-\frac{1}{p^{\prime }(a)}}, \, \, \,
0<a<1.
\end{equation}
is satisfied.
\end{itemize}
\end{theorem}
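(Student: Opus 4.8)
The plan is to prove the cyclic chain $1\Rightarrow 5\Leftrightarrow 4\Rightarrow 3\Leftrightarrow 2\Rightarrow 1$, writing $g(x)=x^{-1/p'(x)}$ throughout, so that $g$ is the quantity appearing on both sides of every condition (note $g(a)=a^{-1/p'(a)}$). Since $p$ is nondecreasing, $1/p'(x)=1-1/p(x)$ is nondecreasing, which is the only structural fact I shall exploit. The first, routine, observation is the reduction of \eqref{E:1.4}: using $p(x)/p'(x)=p(x)-1$ one gets $\left(x^{-1/p'(x)}\right)^{p(x)}x^{-1}=x^{-p(x)}$, so \eqref{E:1.4} is exactly the modular inequality $\int_a^1\left(a^{1/p'(a)}x^{-1}\right)^{p(x)}dx\le C$; comparing this with the norm in \eqref{E:2.4} through the modular–norm inequalities \eqref{e:2.1}–\eqref{e:2.2} yields $4\Leftrightarrow 5$ immediately (up to renaming the constant).

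Next I would settle $2\Leftrightarrow 3$, the purely one-dimensional heart of the statement. For $3\Rightarrow 2$ I integrate the power bound $g(x)\le C\,g(a)(a/x)^{\epsilon}$ contained in \eqref{e:2.5} against $dx/x$ over $(a,1)$, the integral $\int_a^1 x^{-\epsilon-1}dx$ being comparable to $a^{-\epsilon}$, which reproduces \eqref{E:2.2}. For the converse $2\Rightarrow 3$ I set $G(a)=\int_a^1 g(x)\,dx/x$; condition \eqref{E:2.2} reads $G(a)\le C\,g(a)$, and since $G$ is absolutely continuous with $G'(a)=-g(a)/a$ a.e. this is the differential inequality $aG'(a)\le -C^{-1}G(a)$, whose integration gives that $G(x)\,x^{1/C}$ is decreasing. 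It remains to replace $G$ by $g$: using that $1/p'$ is nondecreasing I get $g(x)\ge \tfrac12 g(t)$ for $t\le x\le 2t$, hence $g(t)\le C\int_t^{2t}g\,dx/x\le C\,G(t)$, so $g\simeq G$ and $g(x)x^{1/C}$ is almost decreasing, i.e. \eqref{e:2.5} holds with $\epsilon=1/C$.

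For $4\Rightarrow 3$ — the self-improvement step — I would freeze dyadic scales $g_k=g(2^k a)$ (with $p_k$ the exponent on the $k$-th block); monotonicity of $1/p'$ makes $g$ doubling on each block, so the rewritten \eqref{E:1.4} becomes $\sum_{k\ge 0}(g_k/g_0)^{p_k}\le C$ uniformly in the starting scale $a$. Bounded modular over arbitrarily many scales, together with this scale-invariance, forces the ratios $g_k/g_0$ to decay geometrically: a run on which $g_k/g_0$ stays above a fixed threshold has length bounded by $C$, and monotonicity propagates the drop, which is precisely the power bound \eqref{e:2.5}. I expect this passage from $\ell^{p(\cdot)}$-summability to uniform geometric decay to be the most delicate of the ``static'' implications.

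The two implications that genuinely involve the operator are $1\Rightarrow 5$ and $2\Rightarrow 1$. For necessity I test \eqref{E:2.1} with $f=\chi_{(0,a)}$: then $x^{-1}Hf(x)=\min(1,a/x)$, so restricting the left-hand norm to $(a,1)$ gives $a\,\|x^{-1}\|_{p(.);(a,1)}$, while monotonicity of $p$ yields $\|\chi_{(0,a)}\|_{p(.)}\simeq a^{1/p(a)}$; rearranging is exactly \eqref{E:2.4}. The sufficiency $2\Rightarrow 1$ is the main obstacle. I would reduce \eqref{E:2.1} to the modular estimate $\int_0^1\left(x^{-1}Hf\right)^{p(x)}dx\le C$ for $I_{p}(f)\le 1$, and estimate $Hf(x)=\int_0^x f$ by a constant-exponent Hardy/H\"older argument at the frozen exponent $p(x)$ on $(0,x)$, which is legitimate because $p(t)\le p(x)$ there; inserting a power weight $t^{\gamma}$ with $\gamma$ tied to the gap $\epsilon$ of \eqref{e:2.5} and applying Fubini turns the $dx/x$ logarithm into a convergent geometric integral controlled by $I_{p}(f)$. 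The essential difficulties are the exponent mismatch between the inner variable $t$ and the outer exponent $p(x)$ — which I would manage by splitting $f$ on $\{f\le 1\}$ and $\{f>1\}$ — and the fact that, lacking any log-H\"older control, all decay must be extracted from the weight condition itself rather than from local constancy of $p$.
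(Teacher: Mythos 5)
The decisive problem is in your $2\Rightarrow 1$, which is the substance of the theorem. Performing H\"older/Jensen ``at the frozen exponent $p(x)$ on $(0,x)$'' and justifying it by $p(t)\leq p(x)$ uses the comparison in the wrong direction: on the set $\{f>1\}$ one has $f^{p(x)}\geq f^{p(t)}$, so the quantity $\int_0^x f(t)^{p(x)}t^{\gamma p(x)}dt$ that your estimate produces is \emph{not} controlled by the modular $I_{p(.)}(f)\leq 1$, and the splitting at height $1$ only disposes of the harmless part $\{f\leq 1\}$. This is not a presentational issue. Let $p=p_0$ on $(0,x_0)$ and $p=P$ on $(x_0,1)$; this $p$ satisfies \eqref{E:2.2} (with constant depending on $x_0$), so the Hardy inequality holds for it, and take $f=\delta^{-1/p_0}\chi_{(0,\delta)}$, $\delta\to 0$, whose modular is $\leq 1$. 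Your scheme needs $\gamma<1-\frac{1}{p_0}$ so that the dual factor $\bigl(\int_0^x t^{-\gamma p'(x)}dt\bigr)^{1/p'(x)}$ is finite for small $x$, and it needs $\gamma\geq \frac{1}{p_0}-\frac{1}{P}$ so that the term $\delta^{\,1+\gamma P-P/p_0}$ arising after Fubini from $x>x_0$ stays bounded; for $p_0<2$ and $P\geq p_0/(2-p_0)$ these requirements are incompatible, so the method returns $+\infty$ on an admissible $p$. The missing idea is exactly the device of the paper's Lemma \ref{l:3.9}: decompose $(0,x)$ into dyadic blocks $B_{x,n}=(2^{-n-1}x,2^{-n}x]$, apply H\"older on each block with the \emph{block-minimum} exponent $p^{-}_{x,n}\leq p(t)$, observe that the block modulars $\int_{B_{x,n}}f^{p^{-}_{x,n}}dt\leq 1+2^{-n}$ are uniformly bounded --- which is what permits replacing the outer exponent $p(x)/p^{-}_{x,n}$ by $1$ before Fubini --- and sum over $n$ using the geometric factors $2^{-n\epsilon}$ supplied by \eqref{e:2.5} together with the doubling Lemma \ref{l:3.2}. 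A single application of H\"older on all of $(0,x)$, at whatever frozen exponent, cannot absorb a large total variation of $p$.

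The rest of your plan is essentially sound, and it departs from the paper in two places worth recording. Your $4\Leftrightarrow 5$ (norm--modular) and $2\Leftrightarrow 3$ are correct; the latter is the paper's Lemma \ref{l:3.5}, with the pleasant simplification that $g(t)\leq C\,G(t)$, $G(a)=\int_a^1 g(x)\,dx/x$, follows from monotonicity of $1/p'$ alone, where the paper invokes its doubling Lemma \ref{l:3.4}. Your necessity step $1\Rightarrow 5$ with $f=\chi_{(0,a)}$ is valid and simpler than the paper's Lemma \ref{l:3.6} (which tests with $x^{-1/p(x)}\chi_{(a/2,a)}$ aimed at \eqref{E:1.4}), except that only the one-sided bound $\Vert\chi_{(0,a)}\Vert_{p(.)}\leq a^{1/p(a)}$ is true --- and it is all you need; the asserted $\simeq$ fails when $p$ jumps at $a$. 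Finally, your $4\Rightarrow 3$ replaces the paper's route (almost-decreasing via Lemma \ref{l:3.7}, reduction to the constant exponent $p^{+}$, and the Bari--Stechkin theorem in Lemma \ref{l:3.8}) by a run-length argument; this can be completed, but not by ``monotonicity propagates the drop'': drops do not propagate, since $g$ may jump upward across a scale. What monotonicity gives is that \emph{rises} persist, $g(2x)\geq\frac12 g(x)$. The correct completion is: bounded runs produce, from every scale $b$, an index $k\leq N+1$ with $g(2^k b)<\frac12 g(b)$; iterate these drop points, and control $g$ between consecutive drop points by noting that a spike there would, by persistence, survive to the later drop point and contradict it. This yields \eqref{e:2.5} with $\epsilon\sim 1/N$ and would be an attractive self-contained substitute for the citation of Bari--Stechkin; as it stands, though, that step and above all the sufficiency $2\Rightarrow 1$ are not yet proofs.
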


\section{ Proof of Theorem \protect\ref{t:2.1}.}

\label{S:3}

\qquad Let $f(x)\geq 0$ be a measurable function such that $\left\Vert
f\right\Vert _{L^{p(.)}(0,1)}\leq 1.$ Then it follows from the inequality %
\eqref{e:2.1} that
$
I_{p(.)}\left( f\right) \leq 1.
$
In order to prove Theorem \ref{t:2.1} we have to show that%
\begin{equation}  \label{e:3.2}
\left\Vert x^{-1}Hf\right\Vert _{L^{p(.)}(0,1)}\leq C_{1}.
\end{equation}%
To prove \eqref{e:3.2}, we establish the estimate
\begin{equation*}
I_{p(.)}\left( \frac{Hf}{x}\right) \leq C_{2}.
\end{equation*}%

Using triangle inequality for $p(.)$-norms and $\epsilon \in (0,1),$ we have%
\begin{equation*}
\left\Vert x^{-1}Hf\right\Vert _{L^{p(.)}(0,1)}\leq \left\Vert
x^{-1}Hf\right\Vert _{L^{p(.)}(0,\epsilon )}+\left\Vert x^{-1}Hf\right\Vert
_{L^{p(.)}(\epsilon ,1)}
\end{equation*}%
\begin{equation}
:=i_{1}+i_{2}.  \label{e:3.4}
\end{equation}%
Taking into account
\begin{equation*}
\frac{Hf(x)}{x}=\int_{0}^{1}f(tx)dt
\end{equation*}%
and using Minkowskii's inequality for $L^{p(.)}$ norms, it follows that
(see, \cite{[6]}, \cite{[17]})
\begin{equation}
i_{1}=\left\Vert \frac{Hf}{x}\right\Vert _{p(.);\,(0,\epsilon )}\leq
\left\Vert \int_{0}^{1}f(.\,t)dt\right\Vert _{p(.);\,(0,\epsilon )}\leq
\int_{0}^{1}\left\Vert f(.\,t)\right\Vert _{p(.);\,(0,\epsilon )}dt.
\label{E:3.7}
\end{equation}

Let us estimate the term $\left\Vert f(.\,t)\right\Vert _{p(.);\,(0,\epsilon
)}$ for $0<t<1.$ Since $p$ is nonincreasing on $(0,\epsilon ),$ we have $%
p(x)\leq p(tx)$ \thinspace\ for \thinspace\ $x\in (0,\epsilon ).$ Therefore,
\begin{align*}
& \int_{0}^{\epsilon }f(xt)^{p(x)}dx\leq \int_{0}^{\epsilon
}f(xt)^{p(x)}\chi _{f(xt)\geq 1}dx
 +\int_{0}^{\epsilon }dx\\ &\leq \epsilon +\int_{0}^{\epsilon
}f(tx)^{p(tx)}\chi _{f(tx)\geq 1}
=\epsilon +\frac{1}{t}\int_{0}^{t\epsilon }f(u)^{p(u)}du.
\end{align*}%
Whence,
\begin{equation*}
\int_{0}^{\epsilon }f(tx)^{p(x)}\leq \frac{1}{t}+\epsilon \leq \frac{2}{t},%
\hspace{0.5cm} 0<t<1.
\end{equation*}%
This implies
\begin{equation*}
\int_{0}^{\epsilon }\left( \frac{f(tx)}{t^{-\frac{1}{p^{-}}}2^{\frac{1}{p^{-}%
}}}\right) ^{p(x)}dx\leq 1,\,\,\,0<t<1.
\end{equation*}%
Therefore and using the definition of $p(.)$ -norms, we get
\begin{equation}
\left\Vert f(\cdot\,t)\right\Vert _{p(.);\,(0,\epsilon )}\leq 2^{\frac{1}{p^{-}}%
}t^{-\frac{1}{p^{-}}},\,\,\,0<t<1.  \label{e:3.9}
\end{equation}%
Using \eqref{e:3.9} and \eqref{E:3.7} for the first summand in \eqref{e:3.4}
we have the estimate
\begin{equation}  \label{e:3.8}
i_{1}\leq 2^{%
\frac{1}{p^{-}}}\int_{0}^{1}t^{-\frac{1}{p^{-}}}dt\leq \frac{p^{-}}{p^{-}-1}%
2^{\frac{1}{p^{-}}}.
\end{equation}

Now we shall estimate the term $\left\Vert \frac{Hf(.)}{.}\right\Vert
_{p(.);\,(\epsilon ,1)}.$ For $x\in (\epsilon ,1)$ \ using Young's inequality%
$,$ we get
\begin{align*}
& \int_{0}^{1}f(tx)dt\leq \int_{0}^{1}\frac{{f(tx)}^{p(tx)}}{p(tx)}%
dt+\int_{0}^{1}\frac{dt}{p^{\prime }(tx)} \\
& \frac{1}{xp^{-}}\int_{0}^{x}f(u)^{p(u)}du+\frac{p^{-}-1}{p^{-}}\leq \frac{1%
}{\epsilon p^{-}}+\frac{1}{(p^{+})^{\prime }}\leq 1+\frac{1}{\epsilon}.
\end{align*}%
Therefore,
\begin{align*}
i_{2}=& \left\Vert \frac{Hf(.)}{.}\right\Vert _{p(.);\,(\epsilon
,1)}=\left\Vert \int_{0}^{1}f(.\,t)dt\right\Vert _{p(.);\,(\epsilon ,1)} \\
& \leq \left( \frac{1}{\epsilon }+1\right)
\left\Vert 1\right\Vert _{p(.);\,(\epsilon ,1)}\leq C.
\end{align*}%
Inserting this estimate and \eqref{e:3.8} in \eqref{e:3.4} we complete the
proof of Theorem \ref{t:2.1}.

\section{ Proof of Theorem \ref{t:2.2}.}

To prove Theorem \ref{t:2.2} we need several lemmas.
\begin{lemma}\label{l:3.1}
Let $p:(0,1)\to[1,\infty)$ be a monotone nondecreasing function such that $p(1)<\infty$ and the condition
\eqref{E:1.4}
is satisfied. Then there exists a constant $C_{1}>0$\, depending on
 $C, \, p(1)$  such that the condition
 \begin{equation}\label{e:1.3}
 \left \vert \frac{1}{p'(2x)}-\frac{1}{p'(x)}\right \vert \ln \frac{1}{x}\leq C_{1}
 \end{equation}
 is satisfied.
\end{lemma}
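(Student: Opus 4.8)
We must show that the integral condition \eqref{E:1.4} forces a log-Hölder-type bound \eqref{e:1.3} on the increment of $1/p'$ between $x$ and $2x$. The plan is to extract this pointwise condition from the global integral condition by restricting the integral in \eqref{E:1.4} to the single dyadic block $(a,2a)$ and exploiting monotonicity of $p$ to control the integrand from below.

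\textbf{Main strategy.} Fix $a\in(0,1/2)$ and set $a$ as the lower endpoint in \eqref{E:1.4}. The key observation is that the full integral $\int_a^1(\cdots)\frac{dx}{x}\le C$ dominates the integral over the subinterval $(a,2a)$, since the integrand is nonnegative. On $(a,2a)$ I would exploit that $p$ is nondecreasing, so $\frac1{p'(x)}$ is nondecreasing as well; hence $x^{-1/p'(x)}\ge x^{-1/p'(2a)}$ there, and the exponent $p(x)\ge p(a)$. The integrand of \eqref{E:1.4} is
\begin{equation*}
\left(a^{\frac{1}{p'(a)}}x^{-\frac{1}{p'(x)}}\right)^{p(x)}.
\end{equation*}
On $(a,2a)$ the factor $a^{1/p'(a)}x^{-1/p'(x)}$ is a ratio of comparable quantities (since $x\asymp a$), so the delicate point is that the difference $\frac{1}{p'(a)}-\frac{1}{p'(x)}$ appears in the exponent of $a/x$, and it is precisely this difference, amplified by $p(x)$ and the large logarithm, that I want to pull out.

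\textbf{Carrying it out.} Writing $a^{1/p'(a)}x^{-1/p'(x)} = x^{1/p'(a)-1/p'(x)}\,(a/x)^{1/p'(a)}$ and noting $(a/x)^{1/p'(a)}$ is bounded below by a positive constant for $x\in(a,2a)$, the integrand is bounded below by a constant times
\begin{equation*}
x^{\left(\frac{1}{p'(a)}-\frac{1}{p'(x)}\right)p(x)} \ge C\, a^{\left(\frac{1}{p'(a)}-\frac{1}{p'(2a)}\right)p(a)},
\end{equation*}
where I use monotonicity to replace $p'(x)$ and $p(x)$ by their endpoint values (choosing the worst case over $(a,2a)$, and keeping track of the sign so the bound goes the right way). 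Integrating this lower bound over $(a,2a)$ against $\frac{dx}{x}$ gives a factor $\ln 2$, so \eqref{E:1.4} yields
\begin{equation*}
a^{\left(\frac{1}{p'(a)}-\frac{1}{p'(2a)}\right)p(a)}\le C'.
\end{equation*}
Taking logarithms converts this into $\left(\frac{1}{p'(2a)}-\frac{1}{p'(a)}\right)p(a)\ln\frac1a\le \ln C'$, and since $p(a)\ge p(0^+)\ge 1$ and $p^+\le p(1)<\infty$, dividing by $p(a)$ (bounded above and below) produces the desired bound \eqref{e:1.3} with $x$ in place of $a$, after relabeling.

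\textbf{Anticipated obstacle.} The main delicacy is the bookkeeping of inequalities: monotonicity of $p$ makes $\frac{1}{p'}$ nondecreasing, so $\frac{1}{p'(a)}-\frac{1}{p'(2a)}\le 0$, which means $a^{(\cdots)p(a)}$ (with $a$ small) is actually large, not small. I must be careful that the lower bound I extract on the integrand genuinely controls the increment I want; the correct move is to bound $x^{-1/p'(x)}$ from below on $(a,2a)$ using the largest value of $1/p'$, namely $1/p'(2a)$, together with $x\le 2a$, so that the integrand is at least $C\,(a/(2a))^{\text{stuff}}\,a^{(1/p'(a)-1/p'(2a))p(x)}$ with the negative exponent making this a genuine lower bound of the form $a^{-|\Delta|\,p(a)}\ln(1/a)$ after logarithms. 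Ensuring the constants depend only on $C$ and $p(1)$ (and not on $a$) is the point that requires the uniform bounds $1\le p(a)\le p^+\le p(1)$.
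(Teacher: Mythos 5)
Your overall scheme (localize the integral in \eqref{E:1.4} to a single dyadic block and pull the increment of $1/p'$ out of the integrand) is the right one, and it is the paper's, but your choice of block $(a,2a)$ breaks the argument at the decisive step. On $(a,2a)$ monotonicity gives only $1/p'(a)\le 1/p'(x)\le 1/p'(2a)$, and since $1/x>1$ a genuine lower bound must use the \emph{smallest} available exponent: $x^{-1/p'(x)}=(1/x)^{1/p'(x)}\ge (1/(2a))^{1/p'(a)}$. That yields $a^{1/p'(a)}x^{-1/p'(x)}\ge 2^{-1/p'(a)}\ge 1/2$, so the integrand over $(a,2a)$ is only bounded below by $2^{-p^{+}}$, and integrating gives the vacuous statement $C\ge 2^{-p^{+}}\ln 2$: the block $(a,2a)$ carries no information about $1/p'(2a)-1/p'(a)$. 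Your claimed bound in the opposite direction, $x^{-1/p'(x)}\ge C(2a)^{-1/p'(2a)}$ (equivalently $x^{(1/p'(a)-1/p'(x))p(x)}\ge C\,a^{(1/p'(a)-1/p'(2a))p(a)}$), is false in general: the exponent $1/p'(x)-1/p'(a)$ has infimum $0$ over $(a,2a)$. For instance, take $p\equiv 2$ on $(0,2a)$ and $p\equiv 3$ on $[2a,1)$; then for $x$ near $a$ your left side is about $a^{-1/2}$ while your right side is about $(2a)^{-2/3}$, which is far larger when $a$ is small. The quantity you are trying to dominate, $(1/a)^{(1/p'(2a)-1/p'(a))p(a)}$, is precisely what is a priori unbounded, so no pointwise estimate valid on $(a,2a)$ can sit above it.

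The fix is to integrate over a block lying to the \emph{right} of $2a$: the paper uses $(2a,4a)$, where monotonicity gives $1/p'(x)\ge 1/p'(2a)$ and hence $x^{-1/p'(x)}\ge (4a)^{-1/p'(2a)}$, so the integrand is at least $\left(4^{-1/p'(2a)}\,a^{1/p'(a)-1/p'(2a)}\right)^{p(x)}$, whose base now carries the full increment from $a$ to $2a$. A second point you wave at but do not settle ("keeping track of the sign so the bound goes the right way"): to replace the variable exponent $p(x)$ by a fixed one while preserving the direction of the inequality you must know the base is $\ge 1$; the paper handles this by a case split --- if $a^{1/p'(a)}(4a)^{-1/p'(2a)}\le 1$ the conclusion is immediate, since then $(1/a)^{1/p'(2a)-1/p'(a)}\le 4$, and otherwise one raises to the power $p(0)$, integrates to pick up the factor $\ln 2$, and takes logarithms. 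With the block corrected and this case split supplied, your argument becomes the paper's proof.
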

\begin{proof}
From \eqref{E:1.4}
it follows that
$$\int_{2a}^{4a} \left(x^{-\frac{1}{p'(x)}} a^{\frac{1}{p'(a)}}
\right)^{p(x)} \frac{dx}{x} \leq C. $$
Since $\frac{1}{p'(x)}$ is monotone nondecreasing, we have
$$\int_{2a}^{4a} \left((4a)^{-\frac{1}{p'(2a)}} a^{\frac{1}{p'(a)}}
\right)^{p(x)} \frac{dx}{x} \leq C. $$
Suppose $ a^{\frac{1}{p'(a)}}(4a)^{-\frac{1}{p'(2a)}}$ is greeter then 1.
Then
$$C\geq \left((4a)^{-\frac{1}{p'(2a)}} a^{\frac{1}{p'(a)}}
\right)^{p(0)}\ln2 \geq 4^{1-p(0)}\ln 2 \, a^{\frac{1}{p'(a)}-\frac{1}{p'(2a)}}.$$
Whence,
$$\left(\frac{1}{a}\right)^{\frac{1}{p'(2a)}-\frac{1}{p'(a)}}\leq 1+\frac{C4^{p(1)-1}}{\ln 2}$$
or $$\left( \frac{1}{p'(2a)}-\frac{1}{p'(a)}\right)\ln \frac{1}{a}\leq \ln \left(\frac{C4^{p(1)-1}}{\ln 2}+1\right)$$
This completes the proof of Lemma \ref{l:3.1} with constant $C_{1}=\ln \left(\frac{C4^{p(1)-1}}{\ln 2}+1\right).$
\end{proof}

\begin{lemma} \label{l:3.2}
Let $p:(0,1)\to [1,\infty)$ be a nondecreasing function satisfying the condition \eqref{E:1.4} and $p(1)<\infty$. Then there exists
a constant $C_{1}>0$
depending on $C$ and $p(0)$ such that for any $\frac{x}{2}\leq y \leq 2x, \, \, 0<x<\frac{1}{4}$  the estimate
\begin{equation}\label{e:4.2}
\frac {1}{C_{1}}\phi (x)\leq \phi(y)\leq C_{1}\phi (x)
\end{equation}
holds, where the function $\phi(t)=t^{-\frac{1}{p'(t)}}.$

\end{lemma}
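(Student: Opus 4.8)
The plan is to prove the two-sided estimate \eqref{e:4.2} by reducing it to Lemma \ref{l:3.1}, which already controls the jump of $\tfrac{1}{p'(t)}$ across dyadic scales. The function $\phi(t)=t^{-1/p'(t)}$ depends on $t$ both through the explicit power $t^{-\alpha}$ and through the varying exponent $\alpha=\alpha(t)=1/p'(t)$, so the comparison of $\phi(y)$ with $\phi(x)$ for $y$ within a factor $2$ of $x$ naturally splits into these two contributions. First I would write, for $\tfrac{x}{2}\le y\le 2x$,
\begin{equation*}
\frac{\phi(y)}{\phi(x)}=y^{-\frac{1}{p'(y)}}\,x^{\frac{1}{p'(x)}}
=\left(\frac{y}{x}\right)^{-\frac{1}{p'(y)}}\,x^{\frac{1}{p'(x)}-\frac{1}{p'(y)}}.
\end{equation*}

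The first factor $(y/x)^{-1/p'(y)}$ is immediately controlled: since $y/x\in[\tfrac12,2]$ and $0\le \tfrac{1}{p'(y)}\le 1$, this factor lies in $[\tfrac12,2]$ regardless of the exponent, so it contributes only a bounded multiplicative constant. The real work is in the second factor $x^{\frac{1}{p'(x)}-\frac{1}{p'(y)}}$, which I would bound by showing that the exponent difference $\bigl|\tfrac{1}{p'(x)}-\tfrac{1}{p'(y)}\bigr|$ times $\ln\tfrac{1}{x}$ stays bounded. Because $\tfrac{1}{p'}$ is monotone nondecreasing and $y$ lies between $x/2$ and $2x$, monotonicity squeezes $\tfrac{1}{p'(y)}$ between $\tfrac{1}{p'(x/2)}$ and $\tfrac{1}{p'(2x)}$; hence
\begin{equation*}
\left|\frac{1}{p'(x)}-\frac{1}{p'(y)}\right|\le \frac{1}{p'(2x)}-\frac{1}{p'(x/2)}
\le \left(\frac{1}{p'(2x)}-\frac{1}{p'(x)}\right)+\left(\frac{1}{p'(x)}-\frac{1}{p'(x/2)}\right).
\end{equation*}
Each of the two terms on the right is exactly of the form handled by Lemma \ref{l:3.1} (the second after replacing $x$ by $x/2$), so multiplying by $\ln\tfrac1x$ and using $\ln\tfrac1x\le\ln\tfrac{2}{x}$, $\ln\tfrac1x\le 2\ln\tfrac{2}{x}$ on $(0,\tfrac14)$ to absorb the shift of scale, I obtain a bound $\bigl|\tfrac{1}{p'(x)}-\tfrac{1}{p'(y)}\bigr|\ln\tfrac1x\le C_1$ for a constant depending only on $C$ and $p(1)$ (equivalently $p(0)$, via $p^{+}<\infty$).

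The main obstacle I anticipate is bookkeeping with the logarithmic factors across the three dyadic points $x/2,\,x,\,2x$: Lemma \ref{l:3.1} is stated with the weight $\ln\tfrac1x$ at the base point, and applying it at $x$ and at $x/2$ produces $\ln\tfrac1x$ and $\ln\tfrac{2}{x}$ respectively, so I must check that these differ only by a bounded factor on the interval $(0,\tfrac14)$ where $\ln\tfrac1x\ge\ln 4>0$ — this is where the restriction $0<x<\tfrac14$ is used and why it cannot be relaxed to the full interval. Once the exponent bound is in hand, exponentiating gives $x^{\frac{1}{p'(x)}-\frac{1}{p'(y)}}=\exp\bigl((\tfrac{1}{p'(x)}-\tfrac{1}{p'(y)})\ln x\bigr)\in[e^{-C_1},e^{C_1}]$, and combining with the harmless first factor yields \eqref{e:4.2} with, say, $C_1$ replaced by $2e^{C_1}$. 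The argument is symmetric in the roles of $\phi(x)$ and $\phi(y)$, so the two-sided inequality follows at once.
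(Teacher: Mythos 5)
Your proof is correct and takes essentially the same approach as the paper's: both arguments use the monotonicity of $1/p'$ to squeeze $\tfrac{1}{p'(y)}$ between dyadic values and then invoke Lemma \ref{l:3.1} to bound the resulting exponent oscillation times $\ln\tfrac1x$, which after exponentiation gives the two-sided comparison. The only cosmetic difference is that you factor $\phi(y)/\phi(x)$ explicitly and apply Lemma \ref{l:3.1} at the base points $x$ and $x/2$, whereas the paper chains inequalities directly and applies it at the base points $x$ and $y$.
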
%
\begin{proof}
Since $\frac{1}{p'} $ is nondecreasing it follows from Lemma \ref{l:3.1} that
\begin{align*}
\phi (y)&\leq \left( \frac{x}{2}\right)^{-\frac{1}{p'(y)}}
\leq \left(\frac{1}{x}\right)^{\frac{1}{p'(2x)}-\frac{1}{p'(x)}} x^{-\frac{1}{p'(x)}}2^{\frac{1}{p'(1)}}\\
&\leq 2\left(   C4^{p(1)-1}+1\right)\phi(x).
\end{align*}
By the same way,

\begin{align*}
\phi (x)&\leq \left( \frac{y}{2}\right)^{-\frac{1}{p'(x)}}
\leq \left(\frac{1}{y}\right)^{\frac{1}{p'(2y)}-\frac{1}{p'(y)}} y^{-\frac{1}{p'(y)}}2^{\frac{1}{p'(1)}}\\
&\leq 2\left(   C4^{p(1)-1}+1\right)\phi(y).
\end{align*}
Therefore, \eqref{e:4.2} is satisfied by the constant $2\left(   C4^{p(1)-1}+1\right).$
\end{proof}

\begin{lemma}\label{l:3.3}
Let $p:(0,1)\to [1,\infty )$ be a nondecreasing function such that $p(1)<\infty$ and the condition \eqref{E:2.2} is satisfied.
Then there exists a constant $C_{1}>0$ depending on $C$ such that the condition \eqref{e:1.3}
is satisfied.
\end{lemma}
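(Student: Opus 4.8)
The plan is to follow the proof of Lemma~\ref{l:3.1} almost verbatim, with \eqref{E:2.2} playing the role of \eqref{E:1.4}; in fact the argument is cleaner here because the integrand in \eqref{E:2.2} carries no exponent $p(x)$. First I would fix $0<a<\tfrac14$ and discard most of the range of integration in \eqref{E:2.2}, keeping only the dyadic block $(2a,4a)\subset(a,1)$:
\begin{equation*}
\int_{2a}^{4a} x^{-\frac{1}{p'(x)}}\frac{dx}{x}\leq \int_{a}^{1} x^{-\frac{1}{p'(x)}}\frac{dx}{x}\leq C\,a^{-\frac{1}{p'(a)}}.
\end{equation*}

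The key step is to bound the integrand below by a constant on $(2a,4a)$, using only the monotonicity of $\tfrac{1}{p'}$ and the behaviour of powers of numbers in $(0,1)$. Since $\tfrac{1}{p'}$ is nondecreasing we have $\tfrac{1}{p'(x)}\geq \tfrac{1}{p'(2a)}$ for $x\in(2a,4a)$; because $\ln\tfrac1x>0$ this yields $x^{-1/p'(x)}\geq x^{-1/p'(2a)}$, and since $x\mapsto x^{-1/p'(2a)}$ is decreasing on $(0,1)$ and $x\leq 4a$ we arrive at $x^{-1/p'(x)}\geq (4a)^{-1/p'(2a)}$. Integrating this constant minorant over $(2a,4a)$ produces a factor $\ln 2$, so that
\begin{equation*}
(4a)^{-\frac{1}{p'(2a)}}\ln 2\leq C\,a^{-\frac{1}{p'(a)}}.
\end{equation*}

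Next I would isolate the difference of exponents. Writing $(4a)^{-1/p'(2a)}=4^{-1/p'(2a)}a^{-1/p'(2a)}$ and using $4^{-1/p'(2a)}\geq 4^{-1}$ (valid since $0\leq \tfrac{1}{p'(2a)}<1$), the previous inequality rearranges to
\begin{equation*}
\left(\frac{1}{a}\right)^{\frac{1}{p'(2a)}-\frac{1}{p'(a)}}\leq \frac{4C}{\ln 2}.
\end{equation*}
Taking logarithms, and recalling that $\tfrac{1}{p'(2a)}-\tfrac{1}{p'(a)}\geq 0$ by monotonicity (so the absolute value may be inserted freely), gives
\begin{equation*}
\left|\frac{1}{p'(2a)}-\frac{1}{p'(a)}\right|\ln\frac{1}{a}\leq \ln\frac{4C}{\ln 2}=:C_1
\end{equation*}
for every $0<a<\tfrac14$, which is precisely \eqref{e:1.3}. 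On the residual interval $\tfrac14\leq a<\tfrac12$ the estimate is immediate, since both $\tfrac{1}{p'}$-values lie in $[0,1]$ and $\ln\tfrac1a\leq \ln 4$; enlarging $C_1$ if necessary then secures \eqref{e:1.3} on the whole range.

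I do not expect a genuine obstacle. The only point demanding care is the two-step pointwise minorisation of $x^{-1/p'(x)}$ on $(2a,4a)$, where one must track the signs of the exponents for arguments in $(0,1)$ so that each inequality points the right way; everything else is the same bookkeeping as in Lemma~\ref{l:3.1}.
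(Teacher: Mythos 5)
Your proof is correct and follows essentially the same route as the paper: restrict \eqref{E:2.2} to the dyadic block $(2a,4a)$, minorize the integrand by $(4a)^{-1/p'(2a)}$ using monotonicity of $1/p'$, pick up the factor $\ln 2$, and absorb $4^{-1/p'(2a)}$ into the constant to get $\left(\frac1a\right)^{\frac{1}{p'(2a)}-\frac{1}{p'(a)}}\leq \frac{4C}{\ln 2}$, exactly as in the paper. Your additional care — tracking the signs in the pointwise minorization and covering the residual range $\frac14\leq a<\frac12$, which the paper's proof silently omits (its block $(2a,4a)$ leaves $(a,1)$ there) — only tightens the argument.
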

\begin{proof}
Using \eqref{E:2.2} we have
\begin{equation*}
Ca^{-\frac{1}{p'(a)}}\geq \int_{2a}^{4a}x^{-\frac{1}{p'(x)}}\frac{dx}{x}\geq \left( \frac{1}{%
4a}\right) ^{\frac{1}{p^{\prime }(2a)}}\ln 2\geq 4^{-\frac{1}{p^{\prime }(1)}%
}\ln 2\left( \frac{1}{a}\right) ^{\frac{1}{p^{\prime }(2a)}};
\end{equation*}%
that is,
\begin{equation*}
\left( \frac{1}{a}\right) ^{\frac{1}{p^{\prime }(2a)}-\frac{1}{p^{\prime }(a)%
}}\leq \frac{4C}{\ln 2}.
\end{equation*}%
This proves \eqref{e:1.3} with constant $C_{1}=\ln\left(\frac{4C}{\ln 2}\right).$
\end{proof}
\bigskip
\begin{lemma}\label{l:3.4}
Let \ $p:(0,1)\to [1,\infty )$ be a
nondecreasing function satisfying the conditions \eqref{E:2.2} and $p(1)<\infty$. Then there exists a
constant $C_{1}$ such that
\begin{equation*}
\frac{1}{C_{1}}\phi (x)\leq \phi (y)\leq C_{1}\phi (x),
\end{equation*}%
for any $\frac{x}{2}$ $<y<2x,$ $0<x<\frac{1}{4},$
where the function $\phi(t)=t^{-\frac{1}{p'(t)}}.$
\end{lemma}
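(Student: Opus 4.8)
The plan is to reduce Lemma~\ref{l:3.4} to the already-proved Lemma~\ref{l:3.2}. The two statements are identical in form and differ only in their hypothesis: Lemma~\ref{l:3.2} starts from \eqref{E:1.4}, whereas here we start from \eqref{E:2.2}. The crucial observation is that the proof of Lemma~\ref{l:3.2} never used \eqref{E:1.4} directly; it used \eqref{E:1.4} only through the one-scale doubling bound \eqref{e:1.3} on $1/p'$ supplied by Lemma~\ref{l:3.1}. Since Lemma~\ref{l:3.3} derives that very same bound \eqref{e:1.3} from \eqref{E:2.2}, the entire argument carries over. Concretely, I would first invoke Lemma~\ref{l:3.3} to obtain a constant $C_{1}>0$ (depending on $C$) with
\begin{equation*}
\left|\frac{1}{p'(2x)}-\frac{1}{p'(x)}\right|\ln\frac{1}{x}\leq C_{1},\qquad 0<x<1 .
\end{equation*}

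Next I would reproduce the estimate of Lemma~\ref{l:3.2} verbatim. Fix $0<x<\tfrac14$ and $\tfrac{x}{2}<y<2x$. Because $1/p'$ is nondecreasing and the exponents in $\phi$ are negative, from $y\geq x/2$ we get $\phi(y)=y^{-1/p'(y)}\leq (x/2)^{-1/p'(y)}=2^{1/p'(y)}x^{-1/p'(y)}$; bounding $1/p'(y)\leq 1/p'(2x)$ by $y\leq 2x$ and using $0<x<1$ yields
\begin{equation*}
\phi(y)\leq 2^{1/p'(1)}\Big(\tfrac1x\Big)^{\frac{1}{p'(2x)}-\frac{1}{p'(x)}}x^{-\frac{1}{p'(x)}} .
\end{equation*}
The middle factor is at most $e^{C_{1}}$ by the displayed bound and $2^{1/p'(1)}\leq 2$, so $\phi(y)\leq 2e^{C_{1}}\phi(x)$. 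The reverse inequality follows from the same computation with $x$ and $y$ interchanged (invoking $x\geq y/2$, $x\leq 2y$, and $1/p'(2y)$ in place of $1/p'(2x)$), exactly as in the two symmetric displays of Lemma~\ref{l:3.2}. Together these give \eqref{e:4.2} with a doubling constant of the form $2e^{C_{1}}$; with the explicit value $e^{C_{1}}=4C/\ln 2$ furnished by Lemma~\ref{l:3.3}, the constant is $8C/\ln 2$.

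I do not anticipate a genuine obstacle: the whole content is the remark that \eqref{E:2.2} and \eqref{E:1.4} both feed into the identical bound \eqref{e:1.3}, after which the doubling of $\phi$ is a direct manipulation of powers. The only points requiring care are the reversal of inequalities when raising a base in $(0,1)$ to a negative power, and tracking which of $1/p'(2x)$ and $1/p'(2y)$ is used in each half of the argument --- both already handled in Lemma~\ref{l:3.2}.
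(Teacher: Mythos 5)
Your proposal is correct and is exactly the paper's argument: the paper's proof of Lemma \ref{l:3.4} consists of the single remark that one applies Lemma \ref{l:3.3} (to get the bound \eqref{e:1.3} from \eqref{E:2.2}) and then repeats the computation of Lemma \ref{l:3.2}. You have merely written out explicitly the details (and the constant $8C/\ln 2$) that the paper leaves implicit.
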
%
\begin{proof}
To prove Lemma \ref{l:3.4} it suffice to apply Lemma \ref{l:3.3} as in Lemma \ref{l:3.2}.
\end{proof}

\begin{lemma}\label{l:3.5}
Let $p:(0,1)\to [1,\infty)$ be a nondecreasing function such that $p(1)<\infty$. Then the following two assertions are equivalent:
\begin{itemize}
\item[1)] The condition \eqref{E:2.2} is satisfied.
\item[2)] There exists an $\epsilon >0$ such that the
function $x^{\epsilon}\phi(x)$ is almost decreasing: there exists a $C_{1}>0 $ such that
\begin{equation}\label{E:2.7}
t_{2}^{\epsilon}\phi(t_{2})\leq C_{1} t_{1}^{\epsilon}\phi(t_{1}),  \, \, \, 0<t_{1}\leq t_{2}<1.
\vspace{0.5cm}
\end{equation}
Here the function $\phi(t)=t^{-\frac{1}{p'(t)}}.$
\end{itemize}

\end{lemma}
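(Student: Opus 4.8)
The plan is to prove the two implications separately; the implication 2)$\Rightarrow$1) is routine, while 1)$\Rightarrow$2) carries all the weight. For 2)$\Rightarrow$1), assume $x^{\epsilon}\phi(x)$ is almost decreasing, so that $\phi(x)\leq C_{1}(a/x)^{\epsilon}\phi(a)$ whenever $a\leq x$. Inserting this into the left side of \eqref{E:2.2} and using the elementary bound $\int_{a}^{1}x^{-\epsilon-1}dx\leq \tfrac{1}{\epsilon}a^{-\epsilon}$ recovers \eqref{E:2.2} with constant $C=C_{1}/\epsilon$; there is no subtlety in this direction.

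For the hard direction 1)$\Rightarrow$2), the first step is to show that, near the origin, $\phi(a)$ is comparable to its tail integral $I(a):=\int_{a}^{1}\phi(x)\tfrac{dx}{x}$. The upper estimate $I(a)\leq C\phi(a)$ is exactly the hypothesis \eqref{E:2.2}. For the reverse estimate I restrict the integral to $[a,2a]$ and invoke the doubling property of $\phi$ from Lemma \ref{l:3.4} (available precisely because \eqref{E:2.2} holds), obtaining $I(a)\geq \int_{a}^{2a}\phi(x)\tfrac{dx}{x}\geq \tfrac{\ln 2}{C_{1}}\phi(a)$ for $0<a<\tfrac14$. Hence $\phi\asymp I$ on $(0,\tfrac14)$.

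The decisive step is to upgrade the integral bound \eqref{E:2.2} into a quantitative decay. Writing $I(a)=\int_{a}^{2a}\phi(x)\tfrac{dx}{x}+I(2a)$ and bounding the first term below by $\tfrac{\ln 2}{C_{1}}\phi(a)\geq \tfrac{\ln 2}{C_{1}C}I(a)$ gives the reverse-doubling contraction $I(2a)\leq \delta\, I(a)$ with $\delta=1-\tfrac{\ln 2}{C_{1}C}\in[0,1)$; iterating yields $I(2^{k}a)\leq \delta^{k}I(a)$, and through $\phi\asymp I$ this transfers to $\phi(2^{k}a)\leq K\delta^{k}\phi(a)$. To convert this geometric decay along dyadic scales into a genuine power gain, take $t_{1}\leq t_{2}$ small and set $k=\lfloor \log_{2}(t_{2}/t_{1})\rfloor$, so that $t_{2}\in[2^{k}t_{1},2^{k+1}t_{1})$. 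One further application of Lemma \ref{l:3.4} compares $\phi(t_{2})$ with $\phi(2^{k}t_{1})$, and the inequality $\delta^{k}\leq 2^{\epsilon}(t_{1}/t_{2})^{\epsilon}$ with $\epsilon=\log_{2}(1/\delta)>0$ produces $t_{2}^{\epsilon}\phi(t_{2})\leq C_{1}'\,t_{1}^{\epsilon}\phi(t_{1})$, which is \eqref{E:2.7}. When $t_{1}$ or $t_{2}$ is bounded away from the origin the estimate is immediate, since $p(1)<\infty$ forces $\phi$ to be bounded above and below by positive constants on compact subsets of $(0,1)$. (Alternatively, the comparability $\phi\asymp I$ combined with the observation that $I(a)\leq C\phi(a)=-Ca\,I'(a)$ shows $x^{1/C}I(x)$ is monotone, giving the same conclusion without the dyadic iteration.)

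The main obstacle is exactly this self-improvement: condition \eqref{E:2.2} is merely an integral bound, yet \eqref{E:2.7} demands a pointwise power decay with a strictly positive exponent. The mechanism that makes $\epsilon>0$ materialize is the reverse-doubling contraction $I(2a)\leq \delta I(a)$, whose validity rests on controlling the local oscillation of $\phi$ — that is, on the doubling estimate of Lemma \ref{l:3.4}. One must also check that every scale comparison remains inside the range $0<x<\tfrac14$ where Lemma \ref{l:3.4} applies, with the complementary region dispatched by the boundedness of $\phi$.
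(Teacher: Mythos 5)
Your proof is correct, and it rests on exactly the same two pillars as the paper's own argument: condition \eqref{E:2.2} read as the upper bound $I(a)\leq C\phi(a)$ for the tail integral $I(a)=\int_{a}^{1}\phi(x)\frac{dx}{x}$, and the doubling property of Lemma \ref{l:3.4} read as the lower bound $I(a)\geq \frac{\ln 2}{C_{1}}\phi(a)$. The difference is in how the self-improvement step is executed. The paper works continuously: writing $g=I$, it notes $g'(x)=-\phi(x)/x$, turns \eqref{E:2.2} into the differential inequality $\frac{1}{Cx}\leq -g'(x)/g(x)$, and integrates (a Gronwall-type step) to conclude that $x^{1/C}g(x)$ is nonincreasing, after which the comparability $\phi\asymp g$ gives \eqref{E:2.7} with $\epsilon=1/C$; this is precisely the ``alternative'' you mention in your final parenthesis. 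You instead run the discrete version: the reverse-doubling contraction $I(2a)\leq \delta I(a)$, iterated along dyadic scales and converted into the power gain $\epsilon=\log_{2}(1/\delta)$. The two mechanisms are equivalent, and each has a small advantage: the paper's integration is shorter and produces an explicit exponent, while your iteration avoids differentiating $I$ and forces explicit tracking of where Lemma \ref{l:3.4} is actually valid (namely $0<x<\frac{1}{4}$), a restriction the paper silently ignores when it applies the doubling bound at arbitrary $t_{2}<1$. The one slightly glib point in your write-up is the outer region: when $t_{1}<\frac{1}{4}\leq t_{2}$, boundedness of $\phi$ on compact subsets bounds $t_{2}^{\epsilon}\phi(t_{2})$ from above but does not by itself bound $t_{1}^{\epsilon}\phi(t_{1})$ from below; one must chain the already-established almost-decreasing estimate on $(0,\frac{1}{4})$ up to the point $\frac{1}{4}$ (or first shrink $\epsilon$ below $\frac{1}{p'(0+)}$ so that $t_{1}^{\epsilon}\phi(t_{1})\geq 1$, which is legitimate since almost-decrease with exponent $\epsilon$ implies it for any smaller positive exponent). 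This is a two-line repair, not a gap. Your easy direction 2)$\Rightarrow$1) is identical to the paper's.
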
%
\begin{proof}\textit{Proof of $1)\to 2).$}
  Denote $g(x)=\int_{x}^{1}\phi (t) \frac{dt}{t}.$
Then
\begin{equation*}
g'(x)=-\frac{\phi(x)}{x}, \hspace{1cm} 0<x<1.
\end{equation*}
Hence
\begin{equation*}
g(x) \leq-Cg'(x)x \hspace{1cm}\text{or} \hspace{1cm}
\frac{1}{C}\frac{1}{x} \leq\frac{-g'(x)}{g(x)}, \hspace{0.5cm}  0<x<1.
\end{equation*}
Integrating this inequality in $x$ over $(t_{1},t_{2}),$  we get
\begin{equation*}
\ln \frac{g(t_{1})}{g(t_{2})}\geq \frac{1}{C}\ln \frac{t_{1}}{t_{2}} \hspace{1cm}    \text {or} \hspace{1cm}
g(t_{2})t_{2}^{\frac{1}{C}}\leq g(t_{1})t_{1}^{\frac{1}{C}}.
\end{equation*}
Since $$ g(t_{2})=\int_{t_{2}}^{1}\phi (x) \frac{dx}{x}\geq \int_{t_{2}}^{2t_{2}}\phi (x) \frac{dx}{x}\geq \frac{1}{C}\phi(t_{2})\ln 2,$$ using \eqref{E:2.2} and assertion of Lemma \ref{l:3.4} we get
$$\frac{\ln 2}{C}\phi(t_{2})t_{2}^{\frac{1}{C}}\leq C \phi(t_{1})t_{1}^{\frac{1}{C}}
$$
Therefore, \eqref{E:2.7} is satisfied with $\epsilon=\frac{1}{C}, \, \, C_{1}=C^{2}.$ \\[5pt]

\emph{Proof of $2)\to 1).$} Estimating directly, we have
\begin{align*}
\int_{a}^{1}\phi (x)\frac{dx}{x}&=\int_{a}^{1}x^{\epsilon}\phi (x)\frac{dx}{x^{1+\epsilon}}\leq C
\int_{a}^{1}a^{\epsilon}\phi (a)\frac{dx}{x^{1+\epsilon}}\\
&=Ca^{\epsilon}\phi(a)\int_{a}^{1}\frac{dx}{x^{1+\epsilon}}=\frac{C}{\epsilon}\phi(a).
\end{align*}
The inequality \eqref{E:2.2} has been proved.
\end{proof}

\bigskip

\begin{lemma} \label{l:3.6}
Let $p:(0,1) \to [1,\infty )$ be \ nondecreasing function such that $p(1)<\infty$.
Then the condition \eqref{E:1.4} is necessary for the inequality \eqref{E:1.1}
to hold.
\end{lemma}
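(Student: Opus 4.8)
The plan is to prove necessity by the usual Hardy-operator strategy: assume \eqref{E:1.1} holds, feed it a carefully normalized test function supported on $(0,a)$, and recognize the left-hand side of \eqref{E:1.4} as the modular of the output restricted to $(a,1)$. Concretely, for fixed $0<a<1$ I would take
\begin{equation*}
f(t)=a^{-\frac{1}{p(a)}}\chi_{(0,a)}(t),\qquad 0<t<1.
\end{equation*}
Since $p$ is nondecreasing, $p(x)\le p(a)$ for $x\in(0,a)$, so $a^{-p(x)/p(a)}\le a^{-1}$ (because $0<a<1$), and hence
\begin{equation*}
I_{p(.)}(f)=\int_{0}^{a}a^{-\frac{p(x)}{p(a)}}\,dx\le a^{-1}\!\int_{0}^{a}dx=1.
\end{equation*}
By the definition of the norm this gives $\Vert f\Vert_{p(.);(0,1)}\le 1$, so the test function is admissible with controlled norm.

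Next I would compute the output. For $x\ge a$ one has $Hf(x)=a^{-1/p(a)}\!\int_0^a dt=a^{1/p'(a)}$, whence $x^{-1}Hf(x)=a^{1/p'(a)}/x$ on $(a,1)$. Discarding the nonnegative contribution from $(0,a)$ and using $\frac{p(x)}{p(x)}=1$ together with $\frac{p(x)}{p'(x)}=p(x)-1$, the modular of the left-hand side obeys
\begin{equation*}
I_{p(.)}\!\left(\tfrac{Hf}{x}\right)\ge \int_{a}^{1}\left(\frac{a^{1/p'(a)}}{x}\right)^{p(x)}\!dx
=\int_{a}^{1}\Bigl(a^{\frac{1}{p'(a)}}x^{-\frac{1}{p'(x)}}\Bigr)^{p(x)}\frac{dx}{x},
\end{equation*}
which is exactly the integral appearing in \eqref{E:1.4}. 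Thus the target quantity is already identified as a modular; what remains is to bound it using the assumed inequality.

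Finally I would apply \eqref{E:1.1} to $f$: it yields $\Vert x^{-1}Hf\Vert_{p(.);(0,1)}\le C\Vert f\Vert_{p(.);(0,1)}\le C$, and we may assume $C\ge1$. Setting $g=x^{-1}Hf$, the bound $\Vert g\Vert_{p(.)}\le C$ gives $I_{p(.)}(g/C)\le 1$, i.e. $\int_{0}^{1}g^{p(x)}C^{-p(x)}\,dx\le 1$; using $C^{-p(x)}\ge C^{-p^{+}}=C^{-p(1)}$ for $C\ge 1$ produces $\int_{a}^{1}g^{p(x)}\,dx\le C^{p(1)}$. Combined with the lower bound above, this is precisely \eqref{E:1.4} with constant $C^{p(1)}$. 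The one step deserving care — and the main obstacle — is this conversion between norm and modular: one must invoke \eqref{e:2.1}–\eqref{e:2.2} (or directly the definition of the norm) and exploit $p^{+}=p(1)<\infty$ so that the exponent $p(x)$ can be replaced by its supremum in the right direction; the rest of the argument is the exact computation of $Hf$ and the monotonicity estimate guaranteeing $\Vert f\Vert_{p(.)}\le1$.
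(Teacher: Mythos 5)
Your proof is correct and follows essentially the same route as the paper: insert a test function concentrated below $a$ whose modular is at most $1$, apply \eqref{E:1.1}, convert the resulting norm bound into a modular bound using $p^{+}\le p(1)<\infty$, and recognize the modular of $x^{-1}Hf$ on $(a,1)$ as the integral in \eqref{E:1.4}. The only difference is cosmetic: the paper uses $f_{0}(x)=x^{-1/p(x)}\chi_{(a/2,a)}(x)$ (exact modular $\ln 2$, with monotonicity of $p$ used to lower-bound $Hf_{0}$), while you use the constant $a^{-1/p(a)}\chi_{(0,a)}$ (monotonicity used instead in the admissibility check, with $Hf$ computed exactly on $(a,1)$, avoiding the factor $2^{-p^{+}}$).
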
%
\begin{proof}
Let $a\in (0,1)$ be a fixed number. Put a test function
\begin{equation*}
f_{0}(x)=x^{-\frac{1}{p(x)}}\chi _{(\frac{a}{2},a)}(x),\hspace{0.5cm} 0<x<1,
\end{equation*}%
into the inequality \eqref{E:1.1}. Then
\begin{equation*}
I_{p(.)}\left( f_{0}\right) =\int_{\frac{a}{2}}^{a}\frac{dx}{x}=\ln
2\leq 1,
\end{equation*}%
therefore, $\left\Vert f_{0}\right\Vert _{p(.)}\leq 1.$ Hence $\left\Vert
\frac{Hf_{0}}{x}\right\Vert _{p(.);(0,1)}\leq C.$ This implies that $%
I_{p(.)}\left( \frac{Hf_{0}}{x}\right) \leq C_{2},$ whence%
\begin{align*}
C_{2} &\geq \int_{a}^{1}\left( \int_{\frac{a}{2}}^{a}t^{-%
\frac{1}{p(t)}}dt\right) ^{p(x)}x^{-p(x)}dx\geq \int_{a}^{1}\left(
\frac{a}{2}a^{-\frac{1}{p(a)}}\right) ^{p(x)}x^{-p(x)}dx \\
&\geq 2^{-p^{+}}\int_{a}^{1}\left( a^{\frac{1}{p'(a)}}x^{-%
\frac{1}{p'(x)}}\right) ^{p(x)}\frac{dx}{x}.
\end{align*}%
Hence
\begin{equation*}
\int_{a}^{1}\left( a^{\frac{1}{p^{\prime }(a)}}x^{-\frac{1}{%
p^{\prime }(x)}}\right) ^{p(x)}\frac{dx}{x}\leq C_{3}.
\end{equation*}%
\end{proof}

\begin{lemma}\label{l:3.7}
Let $p:(0,1)\to[1,\infty)$ be a nondecreasing function satisfying the conditions $p(1)<\infty$ and \eqref{E:1.4}.
Then the function $\phi(x)=x^{-\frac{1}{p'(x)}}$ is almost decreasing;
that is for any $0<t_{1}\leq t_{2}<1$ we have $$ \phi(t_{2})\leq C \phi(t_{1})$$
\end{lemma}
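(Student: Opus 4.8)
The goal of Lemma \ref{l:3.7} is to upgrade the ``almost decreasing with an $\epsilon$-perturbation'' statement already available to us into the stronger claim that $\phi(x)=x^{-1/p'(x)}$ is itself almost decreasing, with no power of $x$ attached. The plan is to combine the machinery developed in Lemmas \ref{l:3.2}--\ref{l:3.5}. First I would observe that condition \eqref{E:1.4} implies condition \eqref{E:2.2}: indeed, dropping the factor $a^{1/p'(a)}$ inside the integrand of \eqref{E:1.4} (noting it is $\geq 1$ is the wrong direction, so instead I would use monotonicity of $1/p'$ together with Lemma \ref{l:3.2}) lets me bound the integral $\int_a^1 x^{-1/p'(x)}\,dx/x$ by a constant multiple of $a^{-1/p'(a)}$. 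Concretely, \eqref{E:1.4} says $\int_a^1 \phi(x)^{p(x)}\,x^{-p(x)}a^{p(x)/p'(a)}\,dx/x\le C$; since the quantity $a^{1/p'(a)}x^{-1/p'(x)}\le 1$ for $x\ge a$ (because $\phi$ is controlled), raising to the power $p(x)\ge p(0)$ decreases it, which after rearrangement yields exactly \eqref{E:2.2}.

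Once \eqref{E:2.2} is in hand, Lemma \ref{l:3.5} applies directly: there exists $\epsilon>0$ and $C_1>0$ so that $x^{\epsilon}\phi(x)$ is almost decreasing, i.e. $t_2^{\epsilon}\phi(t_2)\le C_1 t_1^{\epsilon}\phi(t_1)$ for $0<t_1\le t_2<1$. The remaining task is to remove the $\epsilon$. For this I would exploit the quasi-monotonicity of $\phi$ on dyadic-scale neighborhoods furnished by Lemma \ref{l:3.2} (or \ref{l:3.4}): between $x/2$ and $2x$ the function $\phi$ oscillates by at most a fixed constant factor. Given $t_1\le t_2$, I would write $t_2=2^{-k}\cdot(\text{something comparable to }t_1)$ and compare across dyadic blocks, but the cleaner route is to keep the $\epsilon$-weighted inequality and simply note that since $0<t_1\le t_2<1$ we have $t_1^{\epsilon}\le 1$ and, crucially, $(t_1/t_2)^{\epsilon}\le 1$, so from $t_2^{\epsilon}\phi(t_2)\le C_1 t_1^{\epsilon}\phi(t_1)$ we get $\phi(t_2)\le C_1 (t_1/t_2)^{\epsilon}\phi(t_1)\le C_1\phi(t_1)$.

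Thus the structure is: reduce \eqref{E:1.4} to \eqref{E:2.2}, invoke Lemma \ref{l:3.5} to obtain almost-decrease of $x^{\epsilon}\phi(x)$, and then discard the weight using $(t_1/t_2)^{\epsilon}\le 1$. The step I expect to be the main obstacle is the first reduction, from \eqref{E:1.4} to \eqref{E:2.2}: the integrand in \eqref{E:1.4} carries the variable exponent $p(x)$ on the bracketed ratio, so I cannot naively pull the bound outside the integral, and I must carefully use that the base $a^{1/p'(a)}x^{-1/p'(x)}$ lies in $(0,1]$ for $x\ge a$ (which itself relies on the quasi-monotonicity from Lemma \ref{l:3.2}) and that $p(x)\ge p(0)$ to pass from the $p(x)$-power to the fixed $p(0)$-power, exactly as was done in Lemma \ref{l:3.1}. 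The verification that this base is indeed $\le 1$ is where Lemma \ref{l:3.2} does the real work, and getting the constants to line up there is the delicate part; the subsequent removal of $\epsilon$ is essentially free.
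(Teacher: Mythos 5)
Your plan is circular at its decisive step. You propose to first show \eqref{E:1.4}$\Rightarrow$\eqref{E:2.2}, then invoke Lemma \ref{l:3.5} and strip the weight $x^{\epsilon}$. Steps two and three are indeed fine (almost-decrease of $x^{\epsilon}\phi(x)$ trivially implies almost-decrease of $\phi$, exactly as you note), but the reduction \eqref{E:1.4}$\Rightarrow$\eqref{E:2.2} is not available at this point in the paper's scheme: it is proved only in Lemma \ref{l:3.8}, and its proof there \emph{uses} the conclusion of Lemma \ref{l:3.7} to replace the variable exponent $p(x)$ by the constant exponent $p^{+}$, followed by the Bari--Stechkin theorem. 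Your sketch of the reduction hinges on the claim that $a^{1/p'(a)}x^{-1/p'(x)}\leq 1$ for $x\geq a$ ``because $\phi$ is controlled'' --- but that claim \emph{is} (up to a constant) the statement of Lemma \ref{l:3.7} you are trying to prove. Lemma \ref{l:3.2} cannot supply it: it controls $\phi$ only on the scale $x/2\leq y\leq 2x$, and iterating it over $k$ dyadic steps gives a constant $C_{1}^{k}$ that degenerates. Note also that monotonicity of $1/p'$ pushes the wrong way: for $a\leq x<1$ one has $x^{-1/p'(x)}\geq x^{-1/p'(a)}$, so the growth of the exponent competes against the decay of $x^{-s}$, and no soft monotonicity argument settles whether the base is bounded.

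There is a second, independent error: even granting a bound $a^{1/p'(a)}x^{-1/p'(x)}\leq 1$, your pointwise manipulation runs backwards. For a base in $(0,1]$ and $p(x)\geq 1$, raising to the power $p(x)$ \emph{decreases} the integrand, so $\int_{a}^{1}(\mathrm{base})^{p(x)}\frac{dx}{x}\leq C$ yields no upper bound on $\int_{a}^{1}(\mathrm{base})\frac{dx}{x}$; the inequality \eqref{E:2.2} is the stronger statement, and passing to it is precisely the nontrivial content of Lemma \ref{l:3.8} (via Bari--Stechkin), not a rearrangement. The paper instead proves Lemma \ref{l:3.7} directly from \eqref{E:1.4}: fix $t_{1}=a$, decompose $\int_{a}^{1}$ over dyadic blocks $(2^{n-1}a,2^{n}a]$, use Lemma \ref{l:3.2} on each block to freeze $\phi$ at the endpoint $2^{n}a$, split the indices according to whether $a^{1/p'(a)}(2^{n}a)^{-1/p'(2^{n}a)}\geq 1$ (lower-bounding the exponent by $p^{-}$) or $\leq 1$ (by $p^{+}$), and conclude from the convergence of the resulting series that $a^{1/p'(a)}(2^{n}a)^{-1/p'(2^{n}a)}$ is bounded uniformly in $n$; one further application of Lemma \ref{l:3.2} at the scale of $t_{2}$ finishes. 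You would need to adopt some such direct argument, since the route through \eqref{E:2.2} presupposes the lemma itself.
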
%
\begin{proof}
Put $t_{1}=a.$ Let $2^{k-1}a\leq t_{2}<2^{k}a,$ \hspace{0.2cm}$k\in N.$ Then
using \eqref{E:1.4} and Lemma \ref{l:3.2}, we have%
\begin{align*}
C &\geq \sum_{n=1}^{\infty }\int_{2^{n-1}a}^{2^{n}a}\left(
a^{\frac{1}{p'(a)}}x^{-\frac{1}{p'(x)}}\right) ^{p(x)}%
\frac{dx}{x} \\
&\geq \sum_{n=1, n\in \textbf{N'}}^{\infty }\int_{2^{n-1}a}^{2^{n}a}\left(
a^{\frac{1}{p'(a)}}\left( 2^{n}a\right) ^{-\frac{1}{p'(2^{n}a)}}\right) ^{p^{-}}\frac{dx}{x} \\
&+\sum_{n=1,n\in \textbf{N''}}^{\infty }\int_{2^{n-1}a}^{2^{n}a}\left( a^{%
\frac{1}{p'(a)}}\left( 2^{n}a\right) ^{-\frac{1}{p'(2^{n}a)%
}}\right)^{p^{+}}\frac{dx}{x},
\end{align*}
where $\sum_{n=1, n\in \textbf{N'}}^{\infty }(...)$ \ means summing over $n\in \textbf{N}$
such that  $a^{\frac{1}{p'(a)}}\left( 2^{n}a\right) ^{-\frac{1}{%
p'(2^{n}a)}}\geq 1$ and $\sum_{n=1, n\in \textbf{N''}}^{\infty }(...)$
means summing over $n\in \textbf{N}$ such
that $a^{\frac{1}{p'(a)}}\left(
2^{n}a\right)^{-\frac{1}{p'(2^{n}a)}}\leq 1.$
Therefore,%
\begin{equation}\label{e:4.11}
a^{\frac{1}{p^{\prime }(a)}}\left( 2^{n}a\right) ^{-\frac{1}{p^{\prime
}(2^{n}a)}}\leq 1+\frac{C}{C_{1}\ln 2},\hspace{0.5cm} n\in \textbf{N}.
\end{equation}%
Further using the Lemma \ref{l:3.2}, we deduce from \eqref{e:4.11}
\begin{equation*}
a^{\frac{1}{p^{\prime }(a)}}\left( 2^{k}a\right) ^{-\frac{1}{p^{\prime
}(2^{k}a)}}\leq C_{3},
\end{equation*}%
hence by using Lemma \ref{l:3.2}, we have
\begin{equation*}
a^{\frac{1}{p'(a)}}t_{2}^{-\frac{1}{p'(t_{2})}}\leq C_{4}.
\end{equation*}%

This completes the proof of Lemma \ref{l:3.7}.
\end{proof}

\begin{lemma}\label{l:3.8}
Let  $p:(0,1)\to [1,\infty )$ be a nondecreasing
function satisfying the conditions $p(1)<\infty$  and  \eqref{E:1.4}. Then the condition \eqref{E:2.2} is satisfied,
moreover, the function $x^{-\frac{1}{p^{\prime }(x)}+\epsilon }$ is almost decreasing by some $\epsilon>0$.
\end{lemma}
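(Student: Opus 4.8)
The plan is to establish the integral condition \eqref{E:2.2} directly from \eqref{E:1.4}; the \emph{moreover} part then follows at once, since Lemma \ref{l:3.5} already shows that \eqref{E:2.2} is equivalent to the almost decreasing property of $x^{\epsilon}\phi(x)=x^{-1/p'(x)+\epsilon}$ for a suitable $\epsilon>0$. Throughout write $\phi(x)=x^{-1/p'(x)}$ and note that, since $p$ is nondecreasing with $p(1)<\infty$, we have $p^{+}=p(1)<\infty$. For $a$ bounded away from $0$ the inequality \eqref{E:2.2} is elementary, because $\phi\ge 1$ on $(0,1)$ while $\int_{a}^{1}\phi\,\frac{dx}{x}$ is bounded there (using the almost decreasing bound of Lemma \ref{l:3.7}); so I may assume $a$ is small. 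Decomposing $[a,1]$ into the dyadic rings $[2^{n-1}a,2^{n}a]$ and using the doubling property of $\phi$ from Lemma \ref{l:3.2}, the target \eqref{E:2.2} reduces to the geometric summability $\sum_{n\ge 0}b_{n}\le C$, where $b_{n}=\phi(2^{n}a)/\phi(a)$. Thus everything comes down to producing a definite geometric decay rate for the ratios $b_{n}$.

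The key step, and the main obstacle, is to extract such a rate from \eqref{E:1.4}, since Lemma \ref{l:3.7} only furnishes the one-sided bound $b_{n}\le C_{0}$ and the pointwise log-H\"older consequence of Lemma \ref{l:3.2} is by itself too weak to force any decay. I would argue by a pigeonhole contradiction against the integral in \eqref{E:1.4}. On each ring $[2^{n-1}a,2^{n}a]$ the doubling estimate gives $\phi(x)/\phi(a)\ge b_{n-1}/C_{1}$, so whenever $b_{n-1}\ge \tfrac12$ the integrand $(\phi(x)/\phi(a))^{p(x)}$ is bounded below by $\eta:=(2C_{1})^{-p^{+}}$ throughout that ring, contributing at least $\eta\ln 2$ to the left side of \eqref{E:1.4}. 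Consequently the number of consecutive indices $n$ with $b_{n}\ge \tfrac12$ cannot exceed $m_{0}:=C/(\eta\ln 2)$; in particular, since $b_{0}=1$, for every small $a$ there must exist some $n_{1}\in\{1,\dots,m_{0}+1\}$ with $\phi(2^{n_{1}}a)<\tfrac12\phi(a)$.

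Applying this forced drop repeatedly, now from the base points $2^{n_{1}}a,\ 2^{n_{1}+n_{2}}a,\dots$ (the hypotheses \eqref{E:1.4} and Lemma \ref{l:3.2} are available at every scale), I obtain indices $N_{k}\le k(m_{0}+1)$ with $\phi(2^{N_{k}}a)<2^{-k}\phi(a)$. For an arbitrary $n$ I then choose $k$ with $N_{k}\le n<N_{k+1}\le (k+1)(m_{0}+1)$ and invoke the almost decreasing bound of Lemma \ref{l:3.7} to get $\phi(2^{n}a)\le C_{0}\phi(2^{N_{k}}a)<2C_{0}\,2^{-n/(m_{0}+1)}\phi(a)$. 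This is the desired geometric decay, with rate $\epsilon=1/(m_{0}+1)$; summing the now convergent series $\sum_{n}b_{n}$ and feeding the bound back into the dyadic estimate yields \eqref{E:2.2}. Equivalently, the same inequality shows directly that $t^{\epsilon}\phi(t)$ is almost decreasing, which is the asserted \emph{moreover} (and also follows formally from \eqref{E:2.2} via Lemma \ref{l:3.5}). I expect the contradiction argument producing the forced drop to be the delicate point, the remaining iteration and summation being routine dyadic bookkeeping.
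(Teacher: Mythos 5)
Your proof is correct, but it takes a genuinely different route from the paper's. The paper first uses Lemma \ref{l:3.7} to replace the variable exponent $p(x)$ in \eqref{E:1.4} by the constant $p^{+}$ (since the integrand is bounded, raising it to the larger power only costs a constant), obtaining the constant-exponent Zygmund-type condition \eqref{e:4.13} for $x^{-p^{+}/p^{\prime}(x)}$; it then invokes the Bari--Stechkin theorem \cite{[1]} as a black box, once to produce the $\epsilon$-improvement (almost decreasingness of $x^{-p^{+}/p^{\prime}(x)+\epsilon}$, hence of $x^{-1/p^{\prime}(x)+\epsilon_{1}}$ after taking the $p^{+}$-th root), and again to pass back to the integral condition \eqref{E:2.2}. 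You instead prove the self-improvement from scratch: working directly with the variable-exponent integral \eqref{E:1.4}, you use the doubling property (Lemma \ref{l:3.2}) to bound the integrand below by a fixed constant $\eta=(2C_{1})^{-p^{+}}$ on every dyadic ring where $\phi$ has not yet dropped by a factor $2$, so the bound $C$ in \eqref{E:1.4} caps the number of such rings at $m_{0}=C/(\eta\ln 2)$; iterating the forced drop and interpolating with the almost-decreasing bound of Lemma \ref{l:3.7} yields the explicit geometric decay $\phi(2^{n}a)\leq 2C_{0}2^{-n/(m_{0}+1)}\phi(a)$, from which both \eqref{E:2.2} (by summing the rings) and the almost decreasingness of $x^{\epsilon}\phi(x)$ (directly, or via Lemma \ref{l:3.5}) follow. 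Your use of Lemmas \ref{l:3.2}, \ref{l:3.7} and \ref{l:3.5} creates no circularity, since those depend only on \eqref{E:1.4} or on the already-established \eqref{E:2.2}. What each approach buys: the paper's argument is shorter, at the price of relying on an external classical theorem and on the somewhat glossed-over claim that ``the approach of Lemmas \ref{l:3.3} and \ref{l:3.7}'' transfers to $x^{-p^{+}/p^{\prime}(x)}$; yours is fully self-contained, avoids the reduction to a constant exponent, and produces an explicit decay rate $\epsilon=1/(m_{0}+1)$ in terms of the constant in \eqref{E:1.4}. The only loose ends in your write-up are boundary technicalities (rings near $x=1$, where Lemma \ref{l:3.2} as stated requires $x<\frac{1}{4}$, and the stopping of the iteration when $2^{N_{k}+m_{0}+1}a>1$), but these are routine and are handled no more carefully in the paper itself.
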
%
\begin{proof}
Using \eqref{E:1.4} and Lemma \ref{l:3.7} we have the estimates%
\begin{align*}
C &\geq \int_{a}^{1}\left( a^{\frac{1}{p^{\prime }(a)}}x^{-\frac{1}{%
p^{\prime }(x)}}\right) ^{p(x)}\frac{dx}{x} \\
&\geq C_{4}^{p^{-}}\int_{a}^{1}\left( \frac{1}{C_{4}}a^{\frac{1}{%
p^{\prime }(a)}}x^{-\frac{1}{p^{\prime }(x)}}\right) ^{p(x)}\frac{dx}{x} \\
&C_{4}^{p^{-}-p^{+}}\int_{a}^{1}\left( a^{\frac{1}{p^{\prime }(a)}%
}x^{-\frac{1}{p^{\prime }(x)}}\right) ^{p^{+}}\frac{dx}{x}.
\end{align*}%
This implies%
\begin{equation}\label{e:4.13}
\int_{a}^{1}x^{-\frac{p^{+}}{p^{\prime }(x)}}\frac{dx}{x}\leq
C_{4}^{p(1)-1}a^{-\frac{p^{+}}{p^{\prime }(a)}},\hspace{0.5cm} 0<a<1.
\end{equation}%
Applying the approach of Lemmas \ref{l:3.3} and \ref{l:3.7}, we find the function $x^{-\frac{p^{+}%
}{p^{\prime }(x)}}$ is almost decreasing and satisfies the condition \eqref{e:4.13}. It
follows from the Bari-Stechkin theorem \cite{[1]} (see, also \cite{[90]}) that there exists an $\epsilon >0$
such that the function $\ x^{-\frac{p^{+}}{p^{\prime }(x)}+\epsilon }$ is almost decreasing. This implies the function $x^{-\frac{1}{p^{\prime }(x)}%
+\epsilon _{1}}$ is almost decreasing. Again using Bari-Stechkin result \cite{[1]} we
deduce the function $x^{-\frac{1}{p^{\prime }(x)}}$ satisfies the condition \eqref{E:2.2}%

Hence we have proved that (by using Lemmas \ref{l:3.6} and \ref{l:3.8} for the inequality \eqref{E:1.1}
to hold it is necessary the condition \eqref{E:2.2}. Let us prove that the condition
\eqref{E:2.2} is also sufficient for \eqref{E:1.1}.
\end{proof}
\begin{remark}
It follows from Lemma \ref{l:3.8} that the condition \eqref{E:2.2} for nondecreasing $p:(0,1)\to [1,\infty)$ implies $p(0)>1$.
Hence the condition $p(0)>1$
is necessary (but not sufficient) for the inequality \eqref{E:1.1} to hold.
\end{remark}
\begin{lemma}\label{l:3.9}
Let $p:(0,1)\to [1,\infty )$ be a nondecreasing
function such that the conditions \eqref{E:2.2} and $p(1)<\infty$ is satisfied. Then the inequality
\eqref{E:1.1} holds.
\end{lemma}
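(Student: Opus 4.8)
The plan is to establish \eqref{E:1.1} in its equivalent modular form. By homogeneity I may assume $\|f\|_{L^{p(.)}(0,1)}\le 1$, so that $I_{p(.)}(f)=\int_0^1 f^{p(t)}\,dt\le 1$; by \eqref{e:2.1} it then suffices to produce a constant $C$ with $\int_0^1\big(Hf(x)/x\big)^{p(x)}\,dx\le C$. First I would split $f=f_1+f_2$, with $f_1=f\chi_{\{f\ge 1\}}$ and $f_2=f\chi_{\{f<1\}}$, and use the triangle inequality for the $p(.)$-norm. The small part is immediate: since $0\le f_2<1$ we have $Hf_2(x)\le x$, whence $Hf_2(x)/x\le 1$ and $\|Hf_2/x\|_{p(.);(0,1)}\le\|1\|_{p(.);(0,1)}=1$. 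Thus everything reduces to the large part $g:=f_1$, which carries the decisive feature $g\ge 1$ on its support.

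For $g$ I would pass to the dyadic blocks $E_k=(2^{-k-1},2^{-k}]$, $k\ge 0$, and set $a_j=\int_{E_j}g$, $m_j=\int_{E_j}g^{p(t)}\,dt$, so that $\sum_j m_j\le 1$. For $x\in E_k$ monotonicity of $H$ gives $Hg(x)\le\int_0^{2^{-k}}g=\sum_{j\ge k}a_j$ and $x^{-1}<2^{k+1}$. On each block I apply Hölder with the constant exponent $\sigma_j:=p(2^{-j-1})=\min_{E_j}p$; since $g\ge 1$ where $g\ne 0$ we have $g^{\sigma_j}\le g^{p(t)}$, whence
\[
a_j\le\Big(\int_{E_j}g^{p(t)}\,dt\Big)^{1/\sigma_j}|E_j|^{1/\sigma_j'}=m_j^{1/\sigma_j}\,\Phi_j^{-1},\qquad \Phi_j:=\phi(2^{-j-1}),\ \ \phi(t)=t^{-1/p'(t)},
\]
using $|E_j|^{1/\sigma_j'}=(2^{-j-1})^{1/\sigma_j'}=\Phi_j^{-1}$. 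Collecting this, the target is reduced to the discrete estimate $\sum_k\int_{E_k}\big(2^{k+1}\sum_{j\ge k}m_j^{1/\sigma_j}\Phi_j^{-1}\big)^{p(x)}\,dx\le C$.

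The heart of the proof is summing this series, and here I would feed in the consequences of \eqref{E:2.2}. By Lemma \ref{l:3.5}, $x^{\epsilon}\phi(x)$ is almost decreasing, which at the dyadic points yields $\Phi_j^{-1}\le C\,2^{-\epsilon(j-k)}\Phi_k^{-1}$ for $j\ge k$. Since $\Phi_k^{-1}=(2^{-k-1})^{1/\sigma_k'}$ gives $2^{k+1}\Phi_k^{-1}=(2^{-k-1})^{-1/\sigma_k}$, the inner sum is dominated by $(2^{-k-1})^{-1/\sigma_k}\sum_{j\ge k}2^{-\epsilon(j-k)}m_j^{1/\sigma_j}$. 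On each $E_k$ the power $p(x)$ ranges over $[\sigma_k,P_k]$ with $P_k=p(2^{-k})$, and Lemmas \ref{l:3.3}–\ref{l:3.4} bound $P_k-\sigma_k$ so tightly (of order $1/k$, while $B_k:=2^{k+1}\sum_{j\ge k}a_j$ is only of polynomial size in $2^k$) that $p(x)$ may be frozen to $\sigma_k$ at the cost of a bounded factor; using in addition $m_j\le 1$ and $p^-\le\sigma_j\le p^+$ one reduces everything to the single-exponent estimate $\sum_k\big(\sum_{j\ge k}2^{-\epsilon(j-k)}m_j^{1/p^+}\big)^{p^+}\le C\sum_j m_j$.

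This last inequality is a clean discrete Hardy estimate: applying Jensen's inequality to the inner sum against the summable weight $w_l=2^{-\epsilon l}$, and then interchanging the order of summation (Fubini for series), bounds the left side by $W^{p^+}\sum_j m_j$ with $W=\sum_l w_l<\infty$, and $\sum_j m_j\le 1$. The main obstacle is exactly this passage: the double sum converges only because the almost-monotonicity of $x^{\epsilon}\phi$ (Lemma \ref{l:3.5}, i.e.\ condition \eqref{E:2.2}) supplies the geometric gain $2^{-\epsilon(j-k)}$, and the variable exponent must first be frozen block-by-block via the doubling lemmas. Controlling these two effects at once — the geometric decay from the $\epsilon$-margin and the oscillation of $p$ over each block — is the crux; once both are in hand, the summation is routine.
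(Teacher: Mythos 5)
Your architecture is sound and genuinely different from the paper's proof (the paper decomposes $Hf(x)$ over the $x$-dependent intervals $(2^{-n-1}x,2^{-n}x]$, applies Minkowski's inequality in $n$, and estimates the modular of each piece separately; you use fixed dyadic blocks and a discrete Hardy-type summation). The splitting off of $f\chi_{\{f<1\}}$, the H\"older step on each block with the exponent $\sigma_j$, the geometric gain $\Phi_j^{-1}\le C2^{-\epsilon(j-k)}\Phi_k^{-1}$ from Lemma \ref{l:3.5}, and the freezing of $p(x)$ to $\sigma_k$ on $E_k$ via Lemma \ref{l:3.3} are all correct. The gap is in the very last step, which you yourself identify as the crux: the passage from $\sum_k S_k^{\sigma_k}$, where $S_k=\sum_{j\ge k}2^{-\epsilon(j-k)}m_j^{1/\sigma_j}$, to the single-exponent estimate $\sum_k\tilde S_k^{p^+}\le C\sum_j m_j$ with $\tilde S_k=\sum_{j\ge k}2^{-\epsilon(j-k)}m_j^{1/p^+}$. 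The inner replacement $m_j^{1/\sigma_j}\le m_j^{1/p^+}$ is fine, but the outer exponent cannot be raised from $\sigma_k$ to $p^+$: the inner sums are typically smaller than $1$, and for $0<t<1$ one has $t^{\sigma_k}\ge t^{p^+}$ — the wrong direction. Concretely, take all $m_j=0$ except $m_{k+L}=1$: then $S_k^{\sigma_k}=2^{-\epsilon L\sigma_k}$ while $\tilde S_k^{p^+}=2^{-\epsilon Lp^+}$, so the ratio $2^{\epsilon L(p^+-\sigma_k)}$ is unbounded whenever $\sigma_k<p^+$. Nor can the facts you invoke ($m_j\le1$ and $p^-\le\sigma_j\le p^+$) close the gap by keeping the outer exponent $\sigma_k$ and freezing only the inner one: with $m_j\asymp j^{-1-\delta}$ and $p(0^+)<p^+$ the resulting diagonal terms $m_k^{\sigma_k/p^+}$ are not even summable, although the true quantity $\sum_k S_k^{\sigma_k}$ (whose diagonal terms are exactly $m_k$) is bounded. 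So the reduction as written fails, and it fails precisely at the variable-exponent difficulty rather than at a routine estimate.

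The repair stays entirely inside your framework but must keep the exponents matched. Apply Jensen's inequality to $S_k$ with the normalized weights $2^{-\epsilon(j-k)}/W$, $W=\sum_{l\ge0}2^{-\epsilon l}$, at the variable exponent $\sigma_k$ itself:
\begin{equation*}
S_k^{\sigma_k}\le W^{\sigma_k-1}\sum_{j\ge k}2^{-\epsilon(j-k)}m_j^{\sigma_k/\sigma_j}.
\end{equation*}
Now use the monotonicity of $p$, which at this point of your argument you never exploit: for $j\ge k$ the block $E_j$ lies closer to the origin, so $\sigma_j\le\sigma_k$, hence $\sigma_k/\sigma_j\ge1$, and $m_j\le1$ gives $m_j^{\sigma_k/\sigma_j}\le m_j$. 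Interchanging the order of summation then yields
$\sum_k S_k^{\sigma_k}\le W^{p^+-1}\sum_j m_j\sum_{k\le j}2^{-\epsilon(j-k)}\le W^{p^+}\sum_j m_j\le W^{p^+}$,
which is exactly your target. With this single correction your proof is complete, and it remains a genuine alternative to the paper's argument: fixed dyadic blocks plus a weighted discrete Jensen--Fubini step, versus the paper's pointwise dyadic decomposition, Minkowski inequality in $n$, and per-piece modular estimates.
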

\begin{proof}
Using Lemma \ref{l:3.3} we infer that the function $x^{-\frac{1}{p^{\prime }(x)}%
}$ is almost decreasing. Further, according to Lemma \ref{l:3.5} the condition \eqref{E:2.2} implies that the
function $x^{-\frac{1}{p^{\prime }(x)}+\epsilon }$ is almost decreasing by some
$\epsilon >0.$

Let us prove sufficiency of condition \eqref{E:2.2}. It suffices to consider the case when function $f(x)\geq 0$ is a measurable function such that $\left\Vert
f\right\Vert _{L^{p(.)}(0,1)}\leq 1$ (see, \cite{[3]}). Then
$I_{p(.)}\left( f\right) \leq 1.$
In order to prove Lemma \ref{l:3.9} we have to prove
$\left\Vert x^{-1}Hf\right\Vert _{L^{p(.)}(0,1)}\leq C_{1}.$
We shall derive this inequality from the estimate
$I_{p(.)}\left( x^{-1}Hf\right) \leq C_{2}.
$

By Minkowski inequality, for $L^{p(.)}$ norms, we get the inequalities%
\begin{equation*}
\left\Vert x^{-1}Hf\right\Vert _{L^{p(.)}(0,1)}\leq \left\Vert x^{-\frac{1}{p(x)}-\frac{1}{p%
(x)}}\sum_{n=0}^{\infty
}\int_{2^{-n-1}x}^{2^{-n}x}f(t)dt\right\Vert _{L^{p(.)}(0,1)}
\end{equation*}%
\begin{equation}
\leq \sum_{n=0}^{\infty }\left\Vert x^{-\frac{1}{p(x)}-\frac{1}{p
(x)}}\int_{2^{-n-1}x}^{2^{-n}x}f(t)dt\right\Vert _{L^{p(.)}(0,1
)}  \label{E:3.4}
\end{equation}%
Denote $B_{x,n}=(2^{-n-1}x,2^{-n}x]$ and $p_{x,n}=\inf \{p(t):$ $t\in
B_{x,n}\};n=1,2,...$. Put $\varphi (t)=t^{\frac{1}{p
(t)}}.$ Since the condition \eqref{E:2.2} holds, it follows from Lemma \ref{l:3.8} that
there exists an $\epsilon \in (0,1)$
such that%
\begin{equation}
\frac{\varphi (s)}{s^{\epsilon }}\leq C\frac{\varphi (r)}{r^{\epsilon }},\text{ }%
0<s<r<1.  \label{E:3.5}
\end{equation}%
Then by \eqref{E:3.5} we have%
\begin{equation}
\frac{\varphi (t)}{t^{\epsilon }}\leq C\frac{\varphi (x)}{x^{\epsilon }},
\label{E:3.6}
\end{equation}%
where $t$ is a point in $B_{x,n},0<x<1 $ and the constant $C$ \ does
not depend on $n.$

By using inequality \eqref{E:3.6} and $2^{-n-1}x<t<2^{-n}x$ we have the
estimates%
\begin{equation*}
t^{\frac{1}{p'
(t)}}=t^{\epsilon }t^{\frac{1}{p'
(t)}-\epsilon }\leq Ct^{\epsilon }x^{\frac{1}{p'
(x)}-\epsilon}\leq C2^{-n\epsilon }x^{\frac{1}{p'
(x)}}.
\end{equation*}%
Hence%
\begin{equation*}
x^{-\frac{1}{p'
(x)}}\leq C2^{-n\epsilon }t^{-\frac{1}{p'
(t)}.}
\end{equation*}%
Therefore, and due to Holder's inequality, for $x\in B(0,1),$ we get%
\begin{equation*}
x^{-\frac{1}{p(x)}-\frac{1}{p'
(x)}}\sum_{n=0}^{\infty }\int_{2^{-n-1}x}^{2^{-n}x}f(t)dt
\end{equation*}%
\begin{equation*}
\leq C2^{-n\epsilon}x^{-\frac{1}{p(x)}}t^{-\frac{1}{p'
(t)}}\int_{2^{-n-1}x}^{2^{-n}x}f(t)dt
\end{equation*}%
\begin{equation}\label{E:3.9}
\leq C2^{-n\epsilon }x^{-\frac{1}{p(x)}}t^{-\frac{1}{p'
(t)}}\left( \int_{2^{-n-1}x}^{2^{-n}x}f(t)^{p_{x,n}^{-}}dt\right)^
\frac{1}{p_{x,n}^{-}}\left( 2^{-n}x\right) ^{\frac{1}{\left(
p_{x,n}^{-}\right)'}}
\end{equation}%
It follows from Lemma \ref{l:3.2} that%
\begin{equation}\label{E:3.10}
\left( 2^{-n}x \right) ^{\frac{1}{\left( p_{x,n}^{-}\right)'}}
\leq 2^{-\frac{1}{\left( p_{x,n}^{-}\right)'}}
t^{\frac{1}{\left( p_{x,n}^{-}\right)'}}
\leq C_{1}t^{\frac{1}{p'(t)}},
\end{equation}%
where $C$ depends only $p.$

Combining \eqref{E:3.9} and \eqref{E:3.10} we get%
\begin{equation}
x^{-\frac{1}{p(x)}-\frac{1}{p'
(x)}}\sum_{n=0}^{\infty
}\int_{2^{-n-1}x}^{2^{-n}x}f(t)dt\leq C2^{-n\epsilon }x^{-\frac{1}{p(x)%
}}\left( \int
_{2^{-n-1}x}^{2^{-n}x}f(t)^{p_{x,n}^{-}}dt\right) ^{%
\frac{1}{p_{x,n}^{-}}}  \label{E:3.11}
\end{equation}%
where $0<x<1 ,$ $n=1,2,...$and the constant $C_{2}$ does not depend on $%
n,x.$

Simultaneously,%
\begin{equation*}
\int_{2^{-n-1}x}^{2^{-n}x}f(t)^{p_{x,n}^{-}}dt\leq
\int_{2^{-n-1}x}^{2^{-n}x}f(t)^{p(t)}\chi _{\left\{ f(t)\geq
1\right\} }dt+\int_{2^{-n-1}x}^{2^{-n}x}dt\leq 1+2^{-n} \leq
C_{3}.
\end{equation*}%
By the last inequality and \eqref{E:3.11}, we have%
\begin{equation*}
I_{p(.)}\left( x^{-\frac{1}{p(x)}-\frac{1}{p'
(x)}}\int_{2^{-n-1}x}^{2^{-n}x}f(t)dt\right) \leq C_{4}2^{-n\epsilon
p^{-}}\int_{0}^{1}x^{-1}\left(
\int_{2^{-n-1}x}^{2^{-n}x}f(t)^{p_{x,n}^{-}}dt\right) ^{\frac{p(x)}{%
p_{x,n}^{-}}}dx
\end{equation*}%
\begin{equation*}
\leq C_{4}C_{3}^{\frac{p^{+}}{p^{-}}-1}2^{-n\epsilon
p^{-}}\int_{0}^{1}x^{-1}\left(
\int_{2^{-n-1}x}^{2^{-n}x}\left( f(t)^{p(t)}+1\right) dt\right) dx
\end{equation*}%
which, due to Fubini's theorem, yields%
\begin{equation*}
\leq C_{4}C_{3}^{\frac{p^{+}}{p^{-}}-1}2^{-n\epsilon p^{-}}\ln
2\int_{0}^{2^{-n}}\left(
\int_{2^{-n-1}x}^{2^{-n}x}x^{-1}dx\right) \left(
f(t)^{p(t)}+1\right) dt
\end{equation*}%
\begin{equation}
=C_{5}2^{-n\epsilon p^{-}}\ln 2\int\limits_{0}^{2^{-n}}\left(
f(t)^{p(t)}+1\right) dt\leq C_{6}2^{-n\epsilon p^{-}}.  \label{E:3.16}
\end{equation}%
Therefore,%
\begin{equation*}
\left\Vert x^{-\frac{1}{p(x)}-\frac{1}{p'
(x)}}\int_{2^{-n-1}x}^{2^{-n}x}f(t)dt\right\Vert _{L^{p(.)}(0,1
)}\leq C2^{-\frac{n\epsilon p^{-}}{p^{+}}}
\end{equation*}%
By \eqref{E:3.16} and \eqref{E:3.4}, we get%
\begin{equation*}
\left\Vert x^{-1}Hf\right\Vert _{L^{p(.)}(0,1)}\leq C\sum_{n=0}^{\infty }2^{-\frac{n\epsilon p^{-}}{p^{+}}}\leq
C_{1}.
\end{equation*}

This completes the proof of Lemma \ref{l:3.9}.
\end{proof}

\textbf{Proof of Theorem \ref{t:2.2}}. Let 5) be satisfied, that is the condition
\eqref{E:2.4}. Then by the definition,
\begin{equation*}
\int_{a}^{1}\left( \frac{x^{-1}}{\left\Vert (.)^{-1}\chi
_{\{a,1\}}(.)\right\Vert _{p(.)}}\right) ^{p(x)}dx\leq 1.
\end{equation*}%
Therefore, and using \eqref{E:2.4}, we have%
\begin{equation*}
\int_{a}^{1}\left( \frac{x^{-1}}{Ca^{-\frac{1}{p^{\prime }(a)}}}\right)
^{p(x)}dx\leq 1
\end{equation*}%
or%
\begin{equation*}
\int_{a}^{1}\left( a^{\frac{1}{p^{\prime }(a)}}x^{-\frac{1}{p^{\prime }(x)}%
}\right) ^{p(x)}\frac{dx}{x}\leq C_{1}.
\end{equation*}%
This is the condition \eqref{E:1.4}, that is 4) of Theorem \ref{t:2.2}. Hence $5)\to 4)$ has been proved. According to Lemmas
\ref{l:3.6}, \ref{l:3.7}, \ref{l:3.8}, we have the implication $4)\to 2).$ The implication $2)\to 3)$
follows from Lemma \ref{l:3.5}. The
implication $3)\to 1)$ follows from Lemma \ref{l:3.9}. The implication $1)\to 4)$
is proved in Lemma \ref{l:3.6}.

The implication $3)\to 5)$ is direct: using the condition \eqref{e:2.5}
we have
\begin{align*}& \int_{a}^{1}\left(a^{\frac{1}{p'(a)}}x^{-\frac{1}{p'(x)}}\right)^{p(x)}\frac{dx}{x} \leq \int _{a}^{1}\left(C\left(\frac{a}{x}\right)^\epsilon \right)^{p(x)}\frac{dx}{x}\\ &=\int_{1}^{\frac{1}{a}}C^{p(1)}\left(\frac{1}{t}\right)^{\epsilon p(at)}\frac{dt}{t}\leq C^{p(1)}\int _{1}^{\infty}\frac{dt}{t^{1+\epsilon p(0)}}<C_{2}.
\end{align*}
Rewriting the last inequality, we have
$$ \int_{a}^{1}\left(\frac{x^{-1}}{C_{2}^{\frac{1}{p(1)}}a^{-\frac{1}{p'(a)}}}\right)^{p(x)}dx\leq 1,$$
therefore, the condition \eqref{E:2.4} is satisfied.

This completes the proof of Theorem \ref{t:2.2}.

If the exponent function $p$ in Theorem \ref{t:2.2} is nondecreasing on not all the interval $(0,1)$ but so is only near the origin the following assertion holds.

\begin{remark}
 Let a measurable function $p:[0,1] \to (1,\infty)$ be nondecreasing on some interval $(0,\delta), \, 0<\delta<1$ and $p^{+}<\infty$; then the following statements are
equivalent:
\begin{itemize}
\item[a)] There exists a constant $C>0$ such that the inequality
\begin{equation}
\ \ \left\Vert x^{-1}Hf\right\Vert _{L^{p\left( .\right) }\left( 0,1\right)
}\leq C\left\Vert f\right\Vert _{L^{p\left( .\right) }\left( 0,1\right) }
\label{E:2.1}
\end{equation}
holds for any positive measurable function $f.$

\item[b)] The condition
\begin{equation}  \label{E:2.2}
\int_{a}^{\delta} x^{-\frac{1}{p^{\prime }(x)}}\frac{dx}{x}\leq Ca^{-\frac{1}{%
p^{\prime }(a)}}, \, 0<a<\delta
\end{equation}
is satisfied.

\item[c)] There exists an $\epsilon>0$ such that the function $x^{-\frac{1}{p^{\prime }(x)}+\epsilon}$
is almost decreasing:
\begin{equation}  \label{e:2.5}
t_{2}^{-\frac{1}{p^{\prime }(t_{2})}+\epsilon}\leq C t_{1}^{-\frac{1}{p^{\prime }(t_{1})}+\epsilon} \hspace{0.5cm}\text{as} \hspace{0.5cm}
0<t_{1}\leq t_{2}<\delta
\end{equation}
\item[d)] The condition
\begin{equation}  \label{E:1.4}
\int_{a}^{\delta}\left( a^{\frac{1}{p^{\prime }(a)}} x^{-\frac{1}{p^{\prime }(x)}%
}\right) ^{p(x)}\frac{dx}{x}\leq C,\text{ \ }0<a<\delta
\end{equation}
is satisfied.
\item[e)] The condition
\begin{equation}  \label{E:2.4}
\Vert x^{-1} \Vert_{p(.);(a,\delta)}\leq Ca^{-\frac{1}{p^{\prime }(a)}}, \, \, \,
0<a<\delta.
\end{equation}
is satisfied.
\end{itemize}
\end{remark}
\vspace{1cm}

\end{document}